\numberwithin{equation}{section}
\theoremstyle{plain}
\newtheorem{thm}{\protect\theoremname}
\theoremstyle{theorem}
\newtheorem{mainthm}[thm]{\protect\mainthmname}
\theoremstyle{remark}
\newtheorem{rem}[thm]{\protect\remarkname}
\theoremstyle{remark}
\newtheorem*{acknowledgement*}{\protect\acknowledgementname}
\theoremstyle{definition}
\newtheorem{defn}[thm]{\protect\definitionname}
\theoremstyle{plain}
\newtheorem{fact}[thm]{\protect\factname}
\theoremstyle{plain}
\newtheorem{cor}[thm]{\protect\corollaryname}
\theoremstyle{remark}
\newtheorem{claim}[thm]{\protect\claimname}
\theoremstyle{plain}
\newtheorem{question}[thm]{\protect\questionname}
\theoremstyle{plain}
\newtheorem{prop}[thm]{\protect\propositionname}
\providecommand{\acknowledgementname}{Acknowledgement}
\providecommand{\claimname}{Claim}
\providecommand{\corollaryname}{Corollary}
\providecommand{\definitionname}{Definition}
\providecommand{\factname}{Fact}
\providecommand{\mainthmname}{Main Theorem}
\providecommand{\propositionname}{Proposition}
\providecommand{\questionname}{Question}
\providecommand{\remarkname}{Remark}
\providecommand{\theoremname}{Theorem}
\begin{document}
\global\long\def\code#1{\ulcorner#1\urcorner}%
\global\long\def\p{\mathbf{p}}%
\global\long\def\q{\mathbf{q}}%
\global\long\def\C{\mathfrak{C}}%
\global\long\def\SS{\mathcal{P}}%
 
\global\long\def\pr{\operatorname{pr}}%
\global\long\def\image{\operatorname{im}}%
\global\long\def\otp{\operatorname{otp}}%
\global\long\def\dec{\operatorname{dec}}%
\global\long\def\suc{\operatorname{suc}}%
\global\long\def\pre{\operatorname{pre}}%
\global\long\def\qe{\operatorname{qf}}%
 
\global\long\def\ind{\operatorname{ind}}%
\global\long\def\Nind{\operatorname{Nind}}%
\global\long\def\lev{\operatorname{lev}}%
\global\long\def\Suc{\operatorname{Suc}}%
\global\long\def\HNind{\operatorname{HNind}}%
\global\long\def\minb{{\lim}}%
\global\long\def\concat{\frown}%
\global\long\def\cl{\operatorname{cl}}%
\global\long\def\tp{\operatorname{tp}}%
\global\long\def\id{\operatorname{id}}%
\global\long\def\cons{\left(\star\right)}%
\global\long\def\qf{\operatorname{qf}}%
\global\long\def\ai{\operatorname{ai}}%
\global\long\def\dtp{\operatorname{dtp}}%
\global\long\def\acl{\operatorname{acl}}%
\global\long\def\nb{\operatorname{nb}}%
\global\long\def\limb{{\lim}}%
\global\long\def\leftexp#1#2{{\vphantom{#2}}^{#1}{#2}}%
\global\long\def\intr{\operatorname{interval}}%
\global\long\def\atom{\emph{at}}%
\global\long\def\I{\mathfrak{I}}%
\global\long\def\uf{\operatorname{uf}}%
\global\long\def\ded{\operatorname{ded}}%
\global\long\def\Ded{\operatorname{Ded}}%
\global\long\def\Df{\operatorname{Df}}%
\global\long\def\Th{\operatorname{Th}}%
\global\long\def\eq{\operatorname{eq}}%
\global\long\def\Aut{\operatorname{Aut}}%
\global\long\def\ac{ac}%
\global\long\def\DfOne{\operatorname{df}_{\operatorname{iso}}}%
\global\long\def\modp#1{\pmod#1}%
\global\long\def\sequence#1#2{\left\langle #1\,\middle|\,#2\right\rangle }%
\global\long\def\set#1#2{\left\{  #1\,\middle|\,#2\right\}  }%
\global\long\def\Diag{\operatorname{Diag}}%
\global\long\def\Nn{\mathbb{N}}%
\global\long\def\mathrela#1{\mathrel{#1}}%
\global\long\def\twiddle{\mathord{\sim}}%
\global\long\def\mathordi#1{\mathord{#1}}%
\global\long\def\Qq{\mathbb{Q}}%
\global\long\def\dense{\operatorname{dense}}%
\global\long\def\Rr{\mathbb{R}}%
 
\global\long\def\cof{\operatorname{cof}}%
\global\long\def\tr{\operatorname{tr}}%
\global\long\def\treeexp#1#2{#1^{\left\langle #2\right\rangle _{\tr}}}%
\global\long\def\x{\times}%
\global\long\def\forces{\Vdash}%
\global\long\def\Vv{\mathbb{V}}%
\global\long\def\Uu{\mathbb{U}}%
\global\long\def\tauname{\dot{\tau}}%
\global\long\def\ScottPsi{\Psi}%
\global\long\def\cont{2^{\aleph_{0}}}%
\global\long\def\MA#1{{MA}_{#1}}%
\global\long\def\rank#1#2{R_{#1}\left(#2\right)}%
\global\long\def\cal#1{\mathcal{#1}}%

\def\Ind#1#2{#1\setbox0=\hbox{$#1x$}\kern\wd0\hbox to 0pt{\hss$#1\mid$\hss} \lower.9\ht0\hbox to 0pt{\hss$#1\smile$\hss}\kern\wd0} 
\def\Notind#1#2{#1\setbox0=\hbox{$#1x$}\kern\wd0\hbox to 0pt{\mathchardef \nn="3236\hss$#1\nn$\kern1.4\wd0\hss}\hbox to 0pt{\hss$#1\mid$\hss}\lower.9\ht0 \hbox to 0pt{\hss$#1\smile$\hss}\kern\wd0} 
\def\nind{\mathop{\mathpalette\Notind{}}} 

\global\long\def\ind{\mathop{\mathpalette\Ind{}}}%
\global\long\def\Age{\operatorname{Age}}%
\global\long\def\lex{\operatorname{lex}}%
\global\long\def\len{\operatorname{len}}%

\global\long\def\dom{\operatorname{Dom}}%
\global\long\def\res{\operatorname{res}}%
\global\long\def\alg{\operatorname{alg}}%
\global\long\def\dcl{\operatorname{dcl}}%
 
\global\long\def\nind{\mathop{\mathpalette\Notind{}}}%
\global\long\def\average#1#2#3{Av_{#3}\left(#1/#2\right)}%
\global\long\def\Ff{\mathfrak{F}}%
\global\long\def\mx#1{Mx_{#1}}%
\global\long\def\maps{\mathfrak{L}}%

\global\long\def\Esat{E_{\mbox{sat}}}%
\global\long\def\Ebnf{E_{\mbox{rep}}}%
\global\long\def\Ecom{E_{\mbox{com}}}%
\global\long\def\BtypesA{S_{\Bb}^{x}\left(A\right)}%
\global\long\def\DenseTrees{T_{dt}}%

\global\long\def\init{\trianglelefteq}%
\global\long\def\fini{\trianglerighteq}%
\global\long\def\Bb{\cal B}%
\global\long\def\Lim{\operatorname{Lim}}%
\global\long\def\Succ{\operatorname{Succ}}%

\global\long\def\SquareClass{\cal M}%
\global\long\def\leqstar{\leq_{*}}%
\global\long\def\average#1#2#3{Av_{#3}\left(#1/#2\right)}%
\global\long\def\cut#1{\mathfrak{#1}}%
\global\long\def\NTPT{\text{NTP}_{2}}%
\global\long\def\Zz{\mathbb{Z}}%
\global\long\def\TPT{\text{TP}_{2}}%
\global\long\def\supp{\operatorname{supp}}%

\global\long\def\OurSequence{\mathcal{I}}%
\global\long\def\SUR{SU}%
\global\long\def\ShiftGraph#1#2{Sh_{#2}\left(#1\right)}%
\global\long\def\ShiftGraphHalf#1{\cal G_{#1}^{\frac{1}{2}}}%
\global\long\def\SymShiftGraph#1#2{Sh_{#2}^{sym}\left(#1\right)}%

\title{On uniform definability of types over finite sets for NIP formulas}
\author{Shlomo Eshel and Itay Kaplan}
\thanks{The second author would like to thank the Israel Science Foundation
for partial support of this research (grants no. 1533/14 and 1254/18).}
\address{Itay Kaplan, Einstein Institute of Mathematics, Hebrew University
of Jerusalem, 91904, Jerusalem Israel.}
\email{kaplan@math.huji.ac.il}
\address{Shlomo Eshel, Einstein Institute of Mathematics, Hebrew University
of Jerusalem, 91904, Jerusalem Israel.}
\email{Shlomo.Eshel@mail.huji.ac.il}
\begin{abstract}
Combining two results from machine learning theory we prove that a
formula is NIP if and only if it satisfies uniform definability of
types over finite sets (UDTFS). This settles a conjecture of Laskowski.
\end{abstract}

\subjclass[2010]{03C45, 03C40, 68R05.}
\maketitle

\section{Introduction}

Let $L$ be any language and let $T$ be any $L$-theory. An $L$-formula
$\varphi(x,y)$ has \emph{uniform definability of types over finite
sets} (\emph{UDTFS}) in $T$ iff there is a formula $\psi(y,z)$ which
uniformly (in any model of $T$) defines $\varphi$-types over finite
sets of size $\geq2$ (see Definition \ref{def:(UDTFS)}).  If $\varphi$
has UDTFS, then for any finite $A\subseteq M^{y}\vDash T$, the number
of $\varphi$-types over $A$ is bounded by $\left|A\right|^{\left|z\right|}$,
which immediately implies that $\varphi$ has finite VC-dimension
in $T$, i.e., $\varphi$ is NIP  (if $\varphi$ shatters a finite
set $A$, then the number of $\varphi$-types over $A$ is exponential
in $\left|A\right|$). This raises the question, asked by Laskowski,
of whether these two notions (UDTFS and NIP) are equivalent. Note
that in that case, this also implies the Sauer-Shelah lemma in the
sense of counting types (see \cite[Chapter II, Theorem 4.10(4)]{Sh:c}).
See also the discussion in \cite{LivniSimon}.

This question was first addressed in \cite{MR2610477} where it was
proved assuming that $T$ is weakly o-minimal. Later, \cite{MR2963018}
extended this result to dp-minimal theories. Finally, in \cite[Theorem 15]{ArtemPierreII}
it was proved in the level of the theory $T$: a (complete) theory
is NIP iff every formula has UDTFS. They actually proved something
stronger: in NIP theories, every formula has uniform \emph{honest}
definitions. See Section \ref{subsec:Honest-definitions} below for
the definition.

The main theorem in this paper solves Laskowski's question (and thus
answers all the questions in the final paragraph of \cite{MR2963018}).
\begin{mainthm}
The following are equivalent for an $L$-theory $T$ and an $L$-formula
$\varphi(x,y)$.
\begin{enumerate}
\item $\varphi$ is NIP in $T$ (i.e., NIP in any completion of $T$).
\item $\varphi$ has UDTFS in $T$.
\end{enumerate}
\end{mainthm}

(see Theorem \ref{thm:Main} below.)

The proof has two ingredients, both from machine learning theory.

The first is \cite{MR3549531} which proves the existence of\emph{
sample compression schemes }for concept classes of finite VC-dimension
$d$ whose sizes are bounded in terms of $d$ (answering a question
of Littlestone and Warmuth). Roughly speaking, this result says that
there is some number $k$ depending only on $d$ such that for any
finite set of labeled examples (\emph{concepts}), it is possible to
recover our knowledge on any concept by considering a specific subset
of size $k$ and some bounded extra information. We do not use the
result but rather its proof, and most importantly the proof of Claim
3.1 from there, which we translate to our language.

The second ingredient is \cite{k-isolationExponential} where an upper
bound for the\emph{ recursive teaching dimension} (\emph{RTD}) is
given for concept classes of finite VC-dimension $d$ (the bound in
\cite{k-isolationExponential} is exponential in $d$ and was later
improved to a quadratic bound in \cite{k-isolationQuadratic}). Roughly
speaking this means that there is some number $t$ (depending only
on $d$) such that every concept can be identified by at most $t$
samples according to the recursive teaching model. See Fact \ref{fact:k-isolation}
for a precise statement which follows by reading the definitions.
This results translates in our language to the existence of $\varphi$-types
which are isolated by their restriction to a set of bounded size (see
Corollary \ref{cor:what we use from k-isolation}). This result (or
rather, its proof) will be used in a forthcoming work with Martin
Bays and Pierre Simon which deals with ``compressible'' types in
NIP theories.

Despite the fact that our proof is based on these two results, we
do not need to define any of the machine learning notions mentioned
above so that the proof can be read by anyone with a basic understanding
of model theory.
\begin{rem}
The first ingredient was known by experts for quite some time now
(it was brought to our attention by Pierre Simon in 2015). It was
known that it alone implies UDTFS assuming that the theory has definable
Skolem functions (see Section \ref{subsec:Having-definable-Skolem}).

We became aware of the second ingredient during the aforementioned
work with Martin Bays and Pierre Simon, thanks to Nati Linial who
answered our question about it.

The paper is organized as follows: Section \ref{sec:Proof} contains
all the preliminaries and the proof of the main theorem. Section \ref{sec:Open-questions}
contains some open questions.
\end{rem}

\begin{acknowledgement*}
We thank the anonymous referee for their remarks, and specifically
for suggesting to add Remark \ref{rem:proof of cct} which certainly
helps the presentation. 
\end{acknowledgement*}

\section{\label{sec:Proof}Proof of the main theorem}

Throughout fix a language $L$; all formulas, theories and structures
will be in $L$. For notation, we use $\varphi(x,y)$ to denote a
formula $\varphi$ with a partition of (perhaps a superset of) the
free variables of $\varphi$. When $x$ is a tuple of variables and
$A$ is a set, we write $A^{x}$ to denote the tuples of length $|x|$
from $A$; alternatively, one may think of $A^{x}$ as the set of
functions from the tuple $x$ to $A$. 
\begin{defn}
Let $M$ be a $L$-structure and let $\varphi(x,y)$ be a formula.
Suppose that $p(x)$ is a $\varphi$-type over a set $A\subseteq M^{y}$.
We say that a formula $\psi(y)$ (not necessarily over $\emptyset$)
\emph{defines $p$} if for all $b\in A$ we have $M\vDash\psi(b)\Leftrightarrow\varphi(x,b)\in p(x)$.
\end{defn}

\begin{defn}
\label{def:(UDTFS)}(UDTFS) Let $T$ be an $L$-theory. We say that
$\varphi(x,y)$ has \emph{uniform definability of types over finite
sets in $T$ }(\emph{UDTFS}) if there exists a formula $\psi(y,z)$
such that for every $M\vDash T$ and all finite sets $A\subseteq M^{y}$
with $\left|A\right|\geq2$ the following holds: for every $p(x)\in S_{\varphi}(A)$
there exist $c\in A^{z}$ such that $\psi(y,c)$ defines $p(x)$.
\end{defn}

\begin{defn}
(VC-dimension) Let $X$ be a set and $\mathcal{F}\subseteq\SS(X)$.
Given $A\subseteq X$, we say that it is shattered by $\mathcal{F}$
if for every $S\subseteq A$ there is $F\in\mathcal{F}$ such that
$F\cap A=S$. A family $\mathcal{F}$ is said be a \emph{VC-class}
if there is some $n<\omega$ such that no subset of $X$ of size $n$
is shattered by $\mathcal{F}$. In this case the \emph{VC-dimension
of $\mathcal{F}$}, denoted by $VC(\mathcal{F})$, is the smallest
integer $n$ such that no subset of $X$ of size $n+1$ is shattered
by $\mathcal{F}$.
\end{defn}

\begin{defn}
\label{def:NIP in T}($\varphi$ is NIP in $T$) Suppose that $T$
is an $L$-theory and that $\varphi\left(x,y\right)$ is a formula.
Say that \emph{$\varphi(x,y)$ is NIP in $T$ }if for every $M\vDash T$,
the family $\set{\varphi(M,a)}{a\in M^{y}}$ is a VC-class. This is
equivalent to saying that $\varphi$ is NIP in any completion of $T$.
 
\end{defn}

\begin{rem}
\label{rem:phi NIP in T iff bounded VC-dim}Note that $\varphi$ is
NIP in $T$ iff there is a bound $n<\omega$ such that for every $M\models T$,
the VC-dimension of the family $\set{\varphi(M,a)}{a\in M^{y}}$ is
bounded by $n$ (and this is first-order expressible). This follows
by compactness. Denote the minimal such $n$ by $VC_{T}(\varphi)$
or just $VC(\varphi)$ if $T$ is clear from the context. From now
on we will just write NIP for NIP in $T$ when $T$ is clear from
the context. 
\end{rem}

\begin{fact}
\cite[Remark 9]{Ad}\label{fact:Every-Boolean-combination of NIP is NIP}Suppose
that $T$ is an $L$-theory. The family of formulas $\varphi\left(x,y\right)$
which are NIP is closed under finite Boolean combinations. Moreover,
(it follows from the proof that) if $\varphi_{0},\varphi_{1}$ are
such and $\psi$ is a Boolean combination of $\varphi_{0},\varphi_{1}$,
then $VC_{T}(\psi)$ is bounded in terms of $VC_{T}(\varphi_{0}),VC_{T}(\varphi_{1})$.
\end{fact}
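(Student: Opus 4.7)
The plan is to reduce the statement to two base cases, since every Boolean combination is built from $\neg$ and $\wedge$. It therefore suffices to show: (i) $\neg\varphi_0$ is NIP in $T$ with $VC_T(\neg\varphi_0) = VC_T(\varphi_0)$; and (ii) if $\varphi_0(x,y_0)$ and $\varphi_1(x,y_1)$ are both NIP in $T$ (sharing an $x$-partition after renaming dummies), then $\psi(x,y_0y_1) := \varphi_0(x,y_0)\wedge\varphi_1(x,y_1)$ is NIP in $T$ with $VC_T(\psi)$ bounded by a function of $VC_T(\varphi_0)$ and $VC_T(\varphi_1)$. A straightforward induction on the complexity of the given Boolean combination then finishes the proof, the final bound depending only on the individual VC-dimensions and on the shape of the Boolean formula.

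For (i), in any $M\vDash T$ a finite $A\subseteq M^x$ is shattered by $\set{\neg\varphi_0(M,a)}{a\in M^{y_0}}$ iff it is shattered by $\set{\varphi_0(M,a)}{a\in M^{y_0}}$, because $S\mapsto A\setminus S$ is a bijection on $\SS(A)$. Hence the two VC-dimensions agree and the bound $VC_T(\neg\varphi_0) = VC_T(\varphi_0)$ is immediate.

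For (ii), the tool is the Sauer--Shelah lemma. Set $d_i := VC_T(\varphi_i)$, and for $M\vDash T$ and finite $A\subseteq M^x$ with $|A| = n$, the number of traces $\varphi_i(M,a_i)\cap A$ is at most $\sum_{j\leq d_i}\binom{n}{j}$, a polynomial $P_{d_i}(n)$ of degree $d_i$. Consequently, the number of traces $\psi(M,a_0a_1)\cap A$ is at most $P_{d_0}(n)\cdot P_{d_1}(n)$. If $A$ were shattered by the family associated to $\psi$, all $2^n$ subsets of $A$ would arise as such traces, forcing $2^n \leq P_{d_0}(n)P_{d_1}(n)$; this inequality holds only for $n$ below some explicit $n_0 = n_0(d_0,d_1)$. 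The bound is uniform over $M\vDash T$, so by Remark \ref{rem:phi NIP in T iff bounded VC-dim} we get that $\psi$ is NIP in $T$ with $VC_T(\psi) < n_0(d_0,d_1)$.

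The only real obstacle I expect is the bookkeeping of variable partitions during the induction: Boolean combinations of formulas with distinct $y$-tuples must be re-packaged with a consistent $(x, y\text{-concatenation})$ partition before applying (ii), and one must verify that the VC-bound built from repeated applications of (i) and (ii) depends only on the $VC_T(\varphi_i)$ and on the tree structure of the Boolean combination. Once this convention is fixed, both base cases are elementary and the induction is mechanical.
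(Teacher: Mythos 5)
The paper does not prove this Fact; it is cited verbatim from Adler's \cite[Remark 9]{Ad}, so there is no in-paper proof to compare against. Your argument is correct and is in essence the classical one: reduce Boolean combinations to $\neg$ and $\wedge$, note that negation leaves the VC-dimension unchanged, and for conjunction bound the shatter function of $\set{\varphi_0(M,a_0)\cap\varphi_1(M,a_1)}{a_0,a_1}$ on a size-$n$ set by $P_{d_0}(n)\cdot P_{d_1}(n)$ via Sauer--Shelah, then observe that $2^n\leq P_{d_0}(n)P_{d_1}(n)$ fails once $n$ exceeds a threshold $n_0(d_0,d_1)$, uniformly over $M\vDash T$. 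One small clean-up: the Fact concerns $\varphi_0(x,y)$ and $\varphi_1(x,y)$ with the \emph{same} parameter tuple, whereas your (ii) uses disjoint tuples $y_0,y_1$; this is harmless because $\set{\varphi_0(M,a)\cap\varphi_1(M,a)}{a\in M^y}$ is a subfamily of $\set{\varphi_0(M,a_0)\cap\varphi_1(M,a_1)}{a_0,a_1\in M^y}$ and so has VC-dimension no larger, but it is worth saying so explicitly. Also, the worry in your last paragraph that the bound might depend on the ``tree structure'' of the combination is unnecessary: there are only $16$ Boolean functions of two arguments, so every Boolean combination of $\varphi_0,\varphi_1$ is logically equivalent to one of finitely many formulas, and taking the maximum over these yields a bound depending only on $VC_T(\varphi_0)$ and $VC_T(\varphi_1)$, exactly as the Fact asserts.
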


\begin{fact}
\cite[Proposition 2]{Ad}\label{fact:dual}If $\varphi(x,y)$ is NIP
then so is $\varphi^{opp}(y,x)$ where $\varphi^{opp}$ is the formula
$\varphi$ but with the partition of variables switched. As above,
$VC_{T}(\varphi^{opp})$ is bounded in terms of $VC_{T}(\varphi)$,
in fact $VC_{T}(\varphi^{opp})<2^{VC_{T}(\varphi)+1}$. 
\end{fact}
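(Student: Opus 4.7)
The plan is to prove the contrapositive via the standard Assouad-style encoding argument: assuming $\varphi^{opp}$ dual-shatters a sufficiently large set, extract from the shattering witnesses a $(d{+}1)$-set that is primally shattered by $\varphi$, contradicting $VC_T(\varphi)=d$. The whole thing is elementary combinatorics once the right indexing is set up; the exponential bound $2^{d+1}$ drops out of the fact that one encodes indices by binary strings of length $d+1$.

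Concretely, let $d=VC_T(\varphi)$ and set $n=2^{d+1}$. Working in some $M\vDash T$, suppose for contradiction that there are $b_1,\dots,b_n\in M^y$ shattered by the family $\{\varphi^{opp}(M^y,a):a\in M^x\}$; unraveling definitions, this means that for every $S\subseteq\{1,\dots,n\}$ there is $a_S\in M^x$ with $M\vDash\varphi(a_S,b_i)\iff i\in S$. Identify $\{1,\dots,n\}$ with $\{0,1\}^{d+1}$ via binary expansion $i\mapsto\epsilon_i=(\epsilon_i(1),\dots,\epsilon_i(d+1))$, and for each coordinate $j\in\{1,\dots,d+1\}$ let $A_j=\{i\leq n:\epsilon_i(j)=1\}\subseteq\{1,\dots,n\}$. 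Consider the $d+1$ parameters
\[
 a_{A_1},\;a_{A_2},\;\dots,\;a_{A_{d+1}}.
\]

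I claim these are shattered by $\varphi(x,y)$, which contradicts $VC_T(\varphi)=d$. Given any $T\subseteq\{1,\dots,d+1\}$, let $i_T\in\{1,\dots,n\}$ be the index whose binary string $\epsilon_{i_T}$ is the characteristic vector of $T$. Then for each $j\leq d+1$,
\[
 M\vDash\varphi(a_{A_j},b_{i_T})\iff i_T\in A_j\iff\epsilon_{i_T}(j)=1\iff j\in T,
\]
so $b_{i_T}$ realizes the $\varphi$-type on $\{a_{A_1},\dots,a_{A_{d+1}}\}$ prescribed by $T$. As $T$ ranges over all subsets of $\{1,\dots,d+1\}$, we obtain the desired shattering.

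The only genuinely delicate point is bookkeeping: making sure the primal/dual roles of $x$ and $y$ are tracked correctly (dual-shattering $n$ elements $b_1,\dots,b_n$ produces $2^n$ witnesses $a_S$, and we then need to select $d+1$ of these witnesses that form a primally shattered configuration). The encoding $i\leftrightarrow\epsilon_i$ does exactly this job by matching the $2^{d+1}$ subsets $T\subseteq\{1,\dots,d+1\}$ with $2^{d+1}=n$ indices, and is the reason the bound is precisely $VC_T(\varphi^{opp})<2^{VC_T(\varphi)+1}$ rather than any smaller number. Everything transfers to arbitrary models of $T$ by compactness (using Remark \ref{rem:phi NIP in T iff bounded VC-dim}), so the bound on $VC_T(\varphi^{opp})$ is uniform over $T$.
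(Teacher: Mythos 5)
The paper states this as a cited Fact (referencing [Ad, Proposition 2]) and does not reproduce a proof, so there is no in-paper argument to compare against. Your proposal is a correct, self-contained proof, and it is the standard ``Assouad'' encoding argument that is also what underlies the cited source: identify $n=2^{d+1}$ points dual-shattered by $\varphi^{\mathrm{opp}}$ with the binary strings of length $d+1$, pass to the $d+1$ ``coordinate'' witnesses $a_{A_1},\dots,a_{A_{d+1}}$, and check they are primally shattered by picking, for each $T\subseteq\{1,\dots,d+1\}$, the point $b_{i_T}$ whose binary string is the characteristic vector of $T$. The bookkeeping checks out: the $A_j$ are pairwise distinct (for $j\neq j'$ some string has a $1$ in coordinate $j$ and a $0$ in coordinate $j'$), so the $a_{A_j}$ are genuinely $d+1$ distinct elements of $M^x$, and the displayed equivalences give the contradiction with $VC_T(\varphi)=d$. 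Two minor remarks: the appeal to compactness at the end is unnecessary --- the argument is purely combinatorial and takes place inside a single model $M\vDash T$, uniformly in $M$; and it would read slightly more cleanly to say at the outset which side of the duality lives in $M^x$ and which in $M^y$ relative to the paper's convention $\set{\varphi(M,a)}{a\in M^{y}}$, since you flag the ``delicate bookkeeping'' yourself, but as written the indexing is in fact consistent.
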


\begin{fact}
\label{fac:(VC-theorem)}\cite[Corollary 6.9]{simon2015guide}(VC-theorem;
the existence of $\epsilon$-approximations) For any $d<\omega$ and
$0<\epsilon$ there is some $N=N(d,\epsilon)<\omega$ such that for
any finite set $X$, for any $C\subseteq\SS(X)$ of VC-dimension $\leq d$
and every probability measure $\mu$ on $X$ there exists a multiset
$Y\subseteq X$ of size $\left|Y\right|\leq N$ such that for all
$s\in C$, $\left|\mu(s)-\frac{\left|Y\cap s\right|}{\left|Y\right|}\right|\leq\epsilon$.
\end{fact}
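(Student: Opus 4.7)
The plan is the classical probabilistic (random sampling) proof. I would draw $Y = (y_{1}, \ldots, y_{N})$ i.i.d.\ according to $\mu$ and show that for $N = N(d,\epsilon)$ large enough, with positive probability $Y$ is simultaneously an $\epsilon$-approximation for \emph{every} $s \in C$; the existence of the desired multiset of size $\leq N$ then follows immediately.

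For a single fixed $s \in C$, Hoeffding's inequality bounds $\Pr\bigl[\,|\mu(s) - |Y \cap s|/N| > \epsilon\bigr]$ by $2 e^{-2\epsilon^{2}N}$. The obstacle is that $C$ may be infinite, so a naive union bound over $s \in C$ is useless. The standard way around this is symmetrization: introduce an independent ``ghost sample'' $Y' = (y'_{1}, \ldots, y'_{N})$ drawn from $\mu$, and show (via a short Chebyshev estimate applied to $Y'$) that once $N \geq 2/\epsilon^{2}$,
\[
\Pr\Bigl[\,\sup_{s \in C} \bigl|\mu(s) - |Y \cap s|/N\bigr| > \epsilon\,\Bigr] \;\leq\; 2\, \Pr\Bigl[\,\sup_{s \in C} \bigl|\,|Y \cap s|/N - |Y' \cap s|/N\,\bigr| > \epsilon/2\,\Bigr].
\]

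The payoff is that, conditional on the multiset $Y \cup Y'$, the supremum on the right sees only the finitely many traces $s \cap (Y \cup Y')$ as $s$ ranges over $C$. By the Sauer--Shelah lemma there are at most $O(N^{d})$ such distinct traces. For each one, viewing the partition of $Y \cup Y'$ into $Y$ and $Y'$ as a uniformly random split of $2N$ points, the event in question becomes a hypergeometric tail and is controlled by another Hoeffding bound of the form $2 e^{-\epsilon^{2} N / 2}$. A union bound over the $O(N^{d})$ traces, followed by choosing $N$ of order $(d/\epsilon^{2}) \log(d/\epsilon)$, drives the total probability strictly below $1$, which produces the required $N(d,\epsilon)$.

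The main obstacle is the symmetrization step: it is the delicate part, as it is what replaces the possibly infinite $C$ by its finitely many traces on a concrete random sample. Once that reduction is in place, Sauer--Shelah and Hoeffding combine routinely, and no structure beyond the VC-dimension bound on $C$ is used.
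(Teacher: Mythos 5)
This is a cited fact — the paper invokes \cite[Corollary 6.9]{simon2015guide} and provides no proof of its own — so there is no in-paper argument to compare against. Your sketch is the standard Vapnik--Chervonenkis proof of the $\epsilon$-approximation theorem, which is also the route taken in the cited source: i.i.d.\ sampling, symmetrization via a ghost sample, Sauer--Shelah to reduce to finitely many traces on the combined sample, and a Hoeffding-type tail bound for the random split, followed by a union bound. The structure and the order of quantifiers are all handled correctly, and the threshold $N \geq 2/\epsilon^2$ you state for the symmetrization step is the right one for the Chebyshev estimate you describe.

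One small imprecision in your framing: you motivate symmetrization by saying ``$C$ may be infinite,'' but in the statement $X$ is finite, hence so is $C$. The genuine obstacle is that $|C|$ can be as large as $O(|X|^{d})$ (by Sauer--Shelah applied to $C$ on all of $X$), so a direct union bound would force $N$ to grow with $|X|$; symmetrization is what replaces this by the traces on a sample of size $2N$, giving a bound independent of $|X|$. Also, your target $N = O\bigl((d/\epsilon^{2})\log(d/\epsilon)\bigr)$ is sufficient for the existence claim here, though not optimal ($O(d/\epsilon^{2})$ is achievable); since the paper only needs some bound depending on $d$ and $\epsilon$, this is immaterial.
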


The next fact is the ``second ingredient'' mentioned in the introduction.
\begin{fact}
\label{fact:k-isolation}\cite[Theorem 3]{k-isolationExponential}\cite[Theorem 6]{k-isolationQuadratic}
For all $n<\omega$ there is some $t=t\left(n\right)$ such that if
$X$ is a finite set and $\cal F\subseteq\SS(X)$ is a family of VC-dimension
$\leq n$, then there is some $F\in\cal F$ and $X_{0}\subseteq X$
of size $|X_{0}|\leq t$ such that for all $F'\in\cal F$, $F=F'$
iff $X_{0}\cap F'=X_{0}\cap F$.
\end{fact}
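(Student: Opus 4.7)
The plan is to induct on the VC dimension $n$. The base case $n=0$ is immediate: by Sauer--Shelah such a class has at most one member, so $X_0=\emptyset$ and $t(0)=0$ suffice. For the inductive step, fix $\mathcal{F}$ of VC-dimension $\leq n$ on a finite set $X$; after identifying any two members of $\mathcal{F}$ with the same trace on $X$ (which only shrinks $\mathcal{F}$) we may assume the map $F \mapsto F$ is injective, and then $|\mathcal{F}| = O(|X|^n)$ by Sauer--Shelah.

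The core of the argument is to locate a concept $F \in \mathcal{F}$ that is ``extremal'' in a structural sense, one for which a short certificate witnessing its uniqueness inside $\mathcal{F}$ can be read off. The iterative scheme I would attempt is: pick a point $x \in X$ and partition $\mathcal{F}$ by membership at $x$ into $\mathcal{F}_x^+$ and $\mathcal{F}_x^-$. One of the two halves has size at most $|\mathcal{F}|/2$, and both, viewed as families on $X \setminus \{x\}$, still have VC-dimension $\leq n$. Apply the induction to the smaller half to obtain some $F$ there together with an isolating set $X_0' \subseteq X \setminus \{x\}$ of size at most $t(n)$; then $X_0 := X_0' \cup \{x\}$ also separates $F$ from every member of the opposite half, so $X_0$ isolates $F$ in all of $\mathcal{F}$. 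To close the induction one must arrange that the VC-dimension of the smaller half strictly decreases — for instance by choosing $x$ to lie in a maximum shattered subset and tracking the residual shattered witnesses — so that the recursion bottoms out in finitely many steps governed only by $n$.

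The main obstacle is exactly this choice of splitting point: without care, VC-dimension can fail to drop, and na\"{\i}ve halving uses $\log|\mathcal{F}|$ steps, a quantity which cannot be bounded in terms of $n$ alone. The exponential bound in \cite{k-isolationExponential} circumvents this by tracking a potential function that combines dimension with an auxiliary combinatorial quantity, yielding $t(n) = 2^{O(n)}$; the quadratic improvement in \cite{k-isolationQuadratic} refines the potential using the dual shatter function, together with an application of Fact~\ref{fac:(VC-theorem)} to restrict attention to a bounded-size $\epsilon$-approximation of $X$ on which the analysis is concentrated. Since Fact~\ref{fact:k-isolation} is only used qualitatively in the sequel, the mere existence of $t(n)$ is all we need, and the weaker exponential bound suffices.
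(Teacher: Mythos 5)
Your base case is correct, and the general shape (induct on $n$, find a subfamily of strictly smaller VC-dimension, extend the isolating set) matches the paper's sketch in Remark \ref{rem:proof of cct}. But the heart of your inductive step does not work, and you in effect acknowledge this without repairing it.

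Splitting $\mathcal{F}$ at a single point $x$ into $\mathcal{F}_x^+$ and $\mathcal{F}_x^-$ and passing to the smaller half never by itself reduces the VC-dimension: the smaller half, viewed on $X\setminus\{x\}$, can perfectly well shatter an $n$-element set $e$ disjoint from $x$, so the induction hypothesis is not applicable. Your proposed remedy --- choose $x$ inside a maximum shattered set and ``track residual shattered witnesses'' --- is not a fix; there is no reason such an $x$ forces either half to stop shattering $n$-sets elsewhere, and you have not supplied a potential function or a concrete argument that terminates in $t(n)$ steps. Halving $|\mathcal{F}|$ gives a $\log|\mathcal{F}|$-step recursion, which is unbounded in terms of $n$, exactly as you observe; pointing to the cited papers for the resolution means the key idea is missing from the proof.

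The paper's actual argument avoids single-point splits entirely. One takes a \emph{block} $b\subseteq X$ of the specific size $k=2^n(n-1)+1$ and a pattern $c\subseteq b$, and chooses the pair $(b,c)$ that makes $\mathcal{F}_{b,c}=\{F\in\mathcal{F}:F\cap b=c\}$ nonempty of minimal cardinality over all such pairs. One then shows $VC(\mathcal{F}_{b,c})<n$: if some $n$-set $e$ (necessarily disjoint from $b$) were shattered by $\mathcal{F}_{b,c}$, then for each $x\in b$ the trace on $e$ determines membership at $x$ (else $\{x\}\cup e$ is shattered, contradicting $VC(\mathcal{F})\le n$), and a pigeonhole over the $2^n$ possible traces and the $k$ points of $b$ produces a strictly smaller nonempty $\mathcal{F}_{b'',c''}$, contradicting minimality. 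Taking $t(n)=t(n-1)+k$ and returning $X_0\cup b$ closes the induction. This block-plus-minimality device, with $k$ calibrated so the pigeonhole goes through, is the idea your sketch lacks.
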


\begin{rem}
\label{rem:proof of cct}Even though we do not need the proof of Fact
\ref{fact:k-isolation}, we will provide a sketch, based on the proof
of \cite[Lemma 4]{k-isolationExponential}.  The proof is by induction
on $n$. For $n=0$ this is trivial since then $\left|\cal F\right|=1$
(we can choose $t=0$). Assume the statement is true for $n-1$. Let
$k=2^{n}\left(n-1\right)+1$ and let $t\left(n\right)=t\left(n-1\right)+k$.
We may assume that $\left|X\right|\geq k$. Find $b,c$ such that
$c\subseteq b\subseteq X$, $\left|b\right|=k$ and $\cal F_{b,c}=\set{F\in\cal F}{F\cap b=c}$
is nonempty of minimal size. We claim that $VC\left(\cal F_{b,c}\right)<n$
(considering $\cal F_{b,c}$ as a subset of $\SS\left(X\right)$).
If not, then there is some $e\subseteq X$ such that $\left|e\right|=n$
which is shattered by $\cal F_{b,c}$. Note that $e\cap b=\emptyset$.
For every $x\in b$ there is some subset $e_{x}\subseteq e$ such
that if $F_{1},F_{2}\in\cal F$ and $F_{1}\cap e=e_{x}=F_{2}\cap e$
then $x\in F_{1}$ iff $x\in F_{2}$: indeed, otherwise for some $x\in b$,
$\left\{ x\right\} \cup e$ is shattered by $\cal F$, contradicting
$VC\left(\cal F\right)\leq n$. By pigeonhole (and by the choice of
$k$), we can find some $e'\subseteq e$ and a subset $b'\subseteq b$
such that $\left|b'\right|=n$ and if $F_{1},F_{2}\in\cal F$ are
such that $F_{1}\cap e=e'=F_{2}\cap e$ then $F_{1}\cap b'=F_{2}\cap b'$
(which must be $c\cap b'$). Let $b''=\left(b\cup e\right)\backslash b'$
and $c''=\left(c\cup e'\right)\backslash b'$. Then $\emptyset\neq\cal F_{b'',c''}\subsetneq\cal F_{b,c}$,
a contradiction. Thus $t\left(n\right)=t\left(n-1\right)+k$ works:
given $F,X_{0}$ as in the fact for $\cal F_{b,c}$, the pair $F,X_{0}\cup b$
will work. 
\end{rem}

\begin{cor}
\label{cor:what we use from k-isolation} Let $m,n<\omega$. Then
there is some $k=k(n,m)$ such that if $T$ is an $L$-theory, $\varphi\left(x,y\right)$
is NIP and $VC(\varphi)\leq n$ then for every $M\vDash T$ and finite
$A\subseteq M^{y}$ the following holds:

If $\chi(x)=\bigwedge_{i<m}\varphi(x,a_{i})^{\varepsilon_{i}}$ where
for every $i<m$, $a_{i}\in A$ and $\varepsilon_{i}<2$ (in general,
$\varphi^{0}=\varphi$ and $\varphi^{1}=\neg\varphi$) is consistent,
then there is a type $p_{0}\in S_{\varphi}(A)$ and $A_{0}\subseteq A$
of size $|A_{0}|\leq k$ such that $p_{0}|_{A_{0}}\vdash p_{0}\vdash\chi(x)$.
\end{cor}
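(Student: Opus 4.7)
The plan is to apply Fact \ref{fact:k-isolation} to a family of subsets of $A$ indexed by realizations of $\chi$ in a sufficiently saturated elementary extension, and to use Fact \ref{fact:dual} to bound the VC-dimension of this dual family.

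More concretely, by Fact \ref{fact:dual} we have $VC_T(\varphi^{opp}) \leq n' := 2^{n+1}$. Set $k(n,m) = t(n') + m$, where $t$ is the function from Fact \ref{fact:k-isolation}. Given $M \vDash T$, finite $A \subseteq M^{y}$, and $\chi$ as in the statement, pass to an $|A|^{+}$-saturated elementary extension $N \succeq M$, so that every consistent $\varphi$-type over $A$ is realized in $N$. For $b \in N^{x}$ define $F_{b} = \{a \in A : N \vDash \varphi(b,a)\}$, and set $\mathcal{F} = \{F_{b} : b \in \chi(N)\} \subseteq \mathcal{P}(A)$. Since $\mathcal{F}$ is a subfamily, restricted to $A$, of the dual family $\{\varphi(b, N^{y}) : b \in N^{x}\}$, its VC-dimension is at most $VC_T(\varphi^{opp}) \leq n'$.

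Apply Fact \ref{fact:k-isolation} to $\mathcal{F}$ on the finite ground set $A$ to obtain $b_{0} \in \chi(N)$ and $A_{0}^{*} \subseteq A$ with $|A_{0}^{*}| \leq t(n')$ such that, for all $b' \in \chi(N)$, $F_{b'} = F_{b_{0}}$ iff $F_{b'} \cap A_{0}^{*} = F_{b_{0}} \cap A_{0}^{*}$. Let $A_{0} = A_{0}^{*} \cup \{a_{i} : i < m\}$ (so $|A_{0}| \leq k$) and $p_{0} = \tp_{\varphi}(b_{0}/A) \in S_{\varphi}(A)$. Since $b_{0} \vDash \chi$, clearly $p_{0} \vdash \chi$. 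For $p_{0}|_{A_{0}} \vdash p_{0}$: note first that $p_{0}|_{A_{0}} \vdash \chi$ because $\{a_{i} : i < m\} \subseteq A_{0}$. If $p_{0}|_{A_{0}}$ failed to imply $p_{0}$, some $\varphi(x,a)^{\varepsilon} \in p_{0}$ would not be implied, so by saturation of $N$ there would be $b' \in N$ realizing $p_{0}|_{A_{0}} \cup \{\varphi(x,a)^{1-\varepsilon}\}$. Such a $b'$ would satisfy $\chi$, giving $F_{b'} \in \mathcal{F}$ and $F_{b'} \cap A_{0}^{*} = F_{b_{0}} \cap A_{0}^{*}$, hence $F_{b'} = F_{b_{0}}$ by Fact \ref{fact:k-isolation}; but $a \in F_{b_{0}} \bigtriangleup F_{b'}$, a contradiction.

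The main obstacle is the conceptual reformulation: Fact \ref{fact:k-isolation} speaks of a single concept pinned down by a small subset of the ground set, and one must convert ``small fragment of a $\varphi$-type isolates it over $A$'' into this language by indexing concepts by parameters $b \in \chi(N)$ and taking the ground set to be $A$ itself. This forces the passage to the dual family and the exponential blow-up $n' = 2^{n+1}$ coming from Fact \ref{fact:dual}; enlarging the isolating set by $\{a_{i} : i < m\}$ is the modest further step that guarantees $p_{0}|_{A_{0}} \vdash \chi$, so that realizations of $p_{0}|_{A_{0}}$ in $N$ lie in $\chi(N)$, the domain on which Fact \ref{fact:k-isolation}'s conclusion was obtained.
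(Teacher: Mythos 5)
Your proof is correct and follows essentially the same route as the paper's: apply Fact \ref{fact:k-isolation} to the dual family of $\varphi$-traces on $A$ of realizations of $\chi$, using Fact \ref{fact:dual} to bound its VC-dimension, and then pad the isolating set with $\{a_i : i < m\}$ so that realizations of $p_0|_{A_0}$ automatically satisfy $\chi$. The only cosmetic differences are that the paper works with the ground set $A' = A\setminus\{a_i : i<m\}$ rather than $A$ and stays inside $M$ itself (which is fine since $A$ is finite), and uses the slightly sharper bound $2^{n+1}-1$ in place of your $2^{n+1}$.
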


\begin{proof}
Let $k=k(n,m)=t(2^{n+1}-1)+m$ where $t(n)$ is from Fact \ref{fact:k-isolation}.
Let $A'=A\backslash\set{a_{i}}{i<m}$, and consider the family $\cal F=\set{\varphi(b,A')}{b\vDash\chi}$.
Then the VC-dimension of $\cal F$ is bounded by the dual VC-dimension
of $\varphi$ which is $\leq2^{n+1}-1$. By Fact \ref{fact:k-isolation}
there is some $b\in M^{x}$ and some $X_{0}\subseteq A$ of size $\leq t(2^{n+1}-1)$
such that $\chi(b)$ holds and for all $b'\vDash\chi$, $\varphi(b,A')=\varphi(b',A')$
iff $\varphi(b,X_{0})=\varphi(b',X_{0})$. Let $p_{0}=\tp_{\varphi}\left(b/A\right)$
and let $A_{0}=X_{0}\cup\set{a_{i}}{i<m}$. Obviously $p_{0}\vdash\chi(x)$
and if $b'\vDash p_{0}|_{A_{0}}$ then $b'\vDash\chi$ and since $\varphi(b,X_{0})=\varphi(b',X_{0})$,
it follows that $\varphi(b,A')=\varphi(b',A')$, so that $\varphi(b,A)=\varphi(b',A)$,
i.e., $b'\vDash p_{0}$.
\end{proof}
The following theorem is the main result of this paper.
\begin{thm}
\label{thm:Main}Fix a theory $T$ and a formula $\varphi(x,y)$.
Then the following are equivalent:
\begin{enumerate}
\item $\varphi(x,y)$ is NIP.
\item There is an integer $K(\varphi)$ such that for every model $M\vDash T$
and for every finite nonempty $A\subseteq M^{y}$ and $p\in S_{\varphi}(A)$,
there is a formula $\psi(y,z)$ where $z=\sequence{z_{i}}{i<K(\varphi)}$
which is a finite disjunction of  formulas each of the form 
\[
\exists d_{0},...,d_{m-1}\bigwedge_{t<m}\bigwedge_{i\in s_{t}}\varphi(d_{t},z_{i})^{\varepsilon_{i,t}}\wedge\bigwedge_{t\in s}\varphi(d_{t},y)
\]
where $m\leq K(\varphi)$, $s\subseteq m$ and for every $t<m$, $s_{t}\subseteq K(\varphi)$
and $\varepsilon_{i,t}<2$ for all $i\in s_{t}$ such that $\psi(y,a)$
define $p$ for some $a\in A^{z}$.
\item There is an integer $K(\varphi)$ such that for every model $M\vDash T$
and for every finite nonempty $A\subseteq M^{y}$ and $p\in S_{\varphi}(A)$,
there is a formula $\psi(y,z)$ where $z=\sequence{z_{i}}{i<K(\varphi)}$
which is a finite disjunction of  formulas each of the form 
\[
\forall d_{0},...,d_{m-1}\bigwedge_{t<m}\bigwedge_{i\in s_{t}}\varphi(d_{t},z_{i})^{\varepsilon_{i,t}}\rightarrow\bigwedge_{t\in s}\varphi(d_{t},y)
\]
where $m\leq K(\varphi)$, $s\subseteq m$ and for every $t<m$, $s_{t}\subseteq K(\varphi)$
and $\varepsilon_{i,t}<2$ for all $i\in s_{t}$ such that $\psi(y,a)$
define $p$ for some $a\in A^{z}$.
\item $\varphi(x,y)$ have UDTFS in $T$.
\end{enumerate}
\end{thm}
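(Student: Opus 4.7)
My plan is to establish the cycle $(1)\Rightarrow(2)\Rightarrow(4)\Rightarrow(1)$, with $(1)\Rightarrow(3)\Rightarrow(4)$ treated analogously.

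\textbf{Easy implications.} For $(4)\Rightarrow(1)$: UDTFS bounds $|S_\varphi(A)|\leq|A|^{|z|}$, a polynomial bound incompatible with $\varphi$ shattering arbitrarily large sets (where $|S_\varphi(A)|=2^{|A|}$). For $(2)\Rightarrow(4)$ (and $(3)\Rightarrow(4)$ symmetrically): the disjuncts appearing in (2) are parameterised by boundedly many choices (all bounded by $K(\varphi)$), so there are only finitely many disjuncts up to renaming of $z$. A single UDTFS formula $\psi(y,z',\vec{w})$ can then be assembled using components $\vec{w}$ drawn from $A$ (which has $\geq 2$ elements, allowing binary ``flag'' simulation) to select, for each particular $p$, which disjuncts are active.

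\textbf{The main direction $(1)\Rightarrow(2)$.} Following the compression argument of \cite{MR3549531}, fix $M\models T$, finite $A\subseteq M^y$ with $|A|\geq 2$, and $p\in S_\varphi(A)$; write $\ell_p(b)=1$ iff $\varphi(x,b)\in p$. The goal is to produce $\varphi$-types $q_0,\ldots,q_{N-1}\in S_\varphi(A)$ with $N$ bounded purely in terms of $VC(\varphi)$, each $q_t$ isolated over $A$ by a subset $A_t\subseteq A$ of bounded size (via Corollary \ref{cor:what we use from k-isolation}), such that for any realisations $d_t\models q_t$ the majority vote $|\{t<N:\varphi(d_t,b)\}|>N/2$ agrees with $\ell_p(b)=1$ for every $b\in A$. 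Given such data, with $z$ enumerating $\bigcup_t A_t$, indices $s_t$ recording the position of $A_t$ inside $z$, and $\varepsilon_{i,t}$ recording $q_t|_{A_t}$, the disjunction over $s\subseteq N$ with $|s|>N/2$ of
\[ \exists d_0,\ldots,d_{N-1}\;\bigwedge_{t<N}\bigwedge_{i\in s_t}\varphi(d_t,z_i)^{\varepsilon_{i,t}}\wedge\bigwedge_{t\in s}\varphi(d_t,y) \]
is of the form required by (2) and defines $p$.

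\textbf{Construction via boosting.} Maintain a probability measure $\mu_t$ on $A$, initially uniform. At round $t$, apply Fact \ref{fac:(VC-theorem)} with a fixed $\epsilon<1/2$ to extract an $\epsilon$-approximation $Y_t\subseteq A$ of $\mu_t$ of size bounded in terms of $VC(\varphi)$. The conjunction $\chi_t(x)=\bigwedge_{b\in Y_t}\varphi(x,b)^{1-\ell_p(b)}$ is consistent (it is realised by any realisation of $p$ in a saturated extension), so Corollary \ref{cor:what we use from k-isolation} yields $q_t\in S_\varphi(A)$ extending $\chi_t$ and isolated over $A$ by some $A_t$ of bounded size. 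This $q_t$ agrees with $p$ on all of $Y_t$, hence with $\ell_p$ on $\mu_t$-mass $\geq 1-\epsilon$. Update $\mu_{t+1}$ following the AdaBoost reweighting, concentrating on points where the current majority vote of $q_0,\ldots,q_t$ disagrees with $\ell_p$.

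\textbf{Main obstacle.} The technical heart is to bound $N$ purely in terms of $VC(\varphi)$ and not in terms of $|A|$. The point is that the class of weak hypotheses $\{\varphi(d,\cdot)\cap A:d\in M^x\}$ has VC-dimension $\leq 2^{VC(\varphi)+1}$ (Fact \ref{fact:dual}), and majority votes of boundedly many of its members lie inside a Boolean closure of VC-dimension bounded in terms of $VC(\varphi)$ and $N$ (Fact \ref{fact:Every-Boolean-combination of NIP is NIP}); the compression--boosting analysis of \cite{MR3549531} then bounds $N$ by a function of $VC(\varphi)$ only. Finally, $(1)\Rightarrow(3)$ follows by a parallel argument in which the isolation relation $q_t|_{A_t}\vdash q_t$ of Corollary \ref{cor:what we use from k-isolation} is used to place the defining formula into the universal form required by (3), rather than the existential form of (2).
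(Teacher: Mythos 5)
Your overall architecture is sound and your target for $(1)\Rightarrow(2)$ is exactly the paper's: produce boundedly many $\varphi$-types over $A$, each isolated (in the sense of Corollary~\ref{cor:what we use from k-isolation}) by a bounded subset of $A$, whose majority vote agrees with $p$ on all of $A$; the implications $(2),(3)\Rightarrow(4)$ by the coding trick and $(4)\Rightarrow(1)$ by type counting match the paper. The use of Corollary~\ref{cor:what we use from k-isolation} to upgrade a consistent conjunction $\chi_t$ to an isolated extension $q_t$ is also the right move.

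The gap is in the ``Main obstacle'' paragraph, i.e., precisely where you need the number $N$ of weak hypotheses to depend only on $VC(\varphi)$. Your construction runs AdaBoost: at each round you reweight $\mu_{t+1}$ and produce a weak hypothesis $q_t$ with $\mu_t$-error at most $\epsilon<\tfrac12$. Standard boosting analysis (Freund--Schapire) then shows the training error of the majority vote after $N$ rounds drops like $\exp(-c(\tfrac12-\epsilon)^2 N)$; to make this strictly less than $1/|A|$ --- which is what ``majority vote agrees with $\ell_p(b)$ for \emph{every} $b\in A$'' requires --- you need $N=\Omega(\log|A|)$ rounds. This is not bounded in terms of $VC(\varphi)$ alone, and no amount of VC-dimension control on the Boolean closure fixes this (the argument you sketch is circular, since that closure's VC-dimension itself grows with $N$). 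Your claim that ``the compression--boosting analysis of \cite{MR3549531} then bounds $N$ by a function of $VC(\varphi)$ only'' misattributes the mechanism: \cite{MR3549531} does \emph{not} iterate a reweighting scheme. Instead, the paper follows Moran--Yehudayoff's Claim~3.1 (reproduced as Claim~\ref{claim:3.1}): from the weak-learner statement ``for every distribution $\mu$ on $A$ there is a tuple $\bar a$ of bounded length such that the associated hypothesis agrees with $p$ on $\mu$-mass $\geq 2/3$,'' they apply von Neumann's minimax theorem (Fact~\ref{fact:(Von-Neumann's-minimax-theorem):}) to swap quantifiers and obtain a \emph{single} distribution $\nu$ over hypotheses achieving agreement $\geq 2/3$ on \emph{every} point of $A$, and then apply the VC-theorem once more, in the dual, to sample a bounded multiset from $\nu$ whose majority vote is correct everywhere. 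That quantifier flip is the essential content which removes the $\log|A|$ dependence; your boosting loop does not deliver it and would have to be replaced by the minimax argument (or something equivalent) to close the proof.

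Everything else --- the dual VC bound via Fact~\ref{fact:dual}, the assembly of the existential disjunction from the data $(A_t, q_t|_{A_t}, s)$, and the observation that the universal form of $(3)$ is interchangeable with the existential form because each conjunction $\bigwedge_{i<k}\varphi(x,a_i^t)^{\varepsilon^{(a_i^t,h_t)}}$ is consistent and $k$-isolates a single $\varphi$-type over $A$ --- is correct and aligned with the paper's proof.
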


We start with a proof of (1) implies (2), (3), so assume (1). By Remark
\ref{rem:phi NIP in T iff bounded VC-dim} and (1), the VC-dimension
of the family $\set{\varphi(M,b)}{b\in M^{y}}$ is bounded by some
constant integer $VC(\varphi)$.

Towards a proof of (2) and (3), for the next few claims we fix $M\vDash T$,
a finite $A\subseteq M^{y}$ and $p\in S_{\varphi}(A)$. Fix also
$c\vDash p$ (exists in $M$ since $A$ is finite). 

We will produce integers $N,J,k$ depending only on $VC(\varphi)$
and not on $M,A$ or $p$. From these we will be able to construct
the defining formula $\psi$.
\begin{defn}
For $n<\omega$, we say that $f:A^{n}\times2^{n}\longrightarrow M$
is an \emph{$(n,\varphi)$-Skolem function} if for every $(\overline{a},\overline{\varepsilon})\in A^{n}\times2^{n}$,
if there is $b\in M^{x}$ such that $M\vDash\mathord{\mathord{\bigwedge}_{i<n}}\varphi(b,a_{i})^{\varepsilon_{i}}$
then $M\vDash\mathord{\bigwedge}_{i<n}\varphi(f(\overline{a},\overline{\varepsilon}),a_{i})^{\varepsilon_{i}}$.
The function $f$ is a \emph{$\varphi$-Skolem function} if it is
an $(n,\varphi)$-Skolem function for some $n<\omega$.

For every $\overline{a}\in A^{n}$ and $b\in M^{x}$, let $\bar{\varepsilon}^{(\bar{a},b)}\in2^{n}$
be the unique tuple $\overline{\varepsilon}$ for which $M\vDash\mathord{\mathord{\bigwedge}_{i<n}}\varphi(b,a_{i})^{\varepsilon_{i}}$
(when $n=1$ we write $\varepsilon^{(a,b)}$).

\end{defn}

\begin{claim}
\label{claim:N}There is some integer $N=N(VC(\varphi))$ such that
for every probability measure $\mu$ on $A$, there is a tuple $\overline{a}\in A^{\leq N}$
such that for every $(|\overline{a}|,\varphi)$-Skolem function $f$
and for every $b\in M^{x}$ we have that $\mu(\set{a\in A}{\varphi(b,a)\leftrightarrow\varphi(f(\overline{a},\overline{\varepsilon}^{(\overline{a},b)}),a)})\geq\frac{2}{3}$.
\end{claim}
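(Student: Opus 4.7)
The plan is to deduce Claim \ref{claim:N} from the $\epsilon$-approximation theorem (Fact \ref{fac:(VC-theorem)}) applied to the family of symmetric differences of $\varphi$-definable subsets of $A$. The key observation is that the definition of Skolem function is precisely what is needed to guarantee that $b$ and its reconstruction $b' := f(\overline{a}, \overline{\varepsilon}^{(\overline{a},b)})$ agree coordinate-wise on $\overline{a}$ (since $b$ itself witnesses the required conjunction); hence if $\overline{a}$ is an $\epsilon$-approximation (with $\epsilon < 1/3$) for the symmetric-difference family, then agreement on $\overline{a}$ automatically forces $\mu$-agreement on most of $A$.

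Concretely, I would introduce the formula $\chi(y; x_1, x_2) := \varphi(x_1, y) \oplus \varphi(x_2, y)$. By Fact \ref{fact:Every-Boolean-combination of NIP is NIP}, $\chi$ is NIP with $VC_T(\chi) \leq d$ for some $d$ depending only on $VC(\varphi)$. Given a probability measure $\mu$ on $A$, apply Fact \ref{fac:(VC-theorem)} with $\epsilon = 1/6$ to the family $\mathcal{C}' := \{\varphi(b, A) \triangle \varphi(b', A) : b, b' \in M^x\} \subseteq \mathcal{P}(A)$, obtaining $N = N(d, 1/6)$ and a multiset $\overline{a} \subseteq A$ with $|\overline{a}| \leq N$ (which I view as a tuple in $A^{\leq N}$) such that for every $b, b' \in M^x$,
\[
\left| \mu(\varphi(b, A) \triangle \varphi(b', A)) - \frac{|\overline{a} \cap (\varphi(b, A) \triangle \varphi(b', A))|}{|\overline{a}|} \right| \leq \frac{1}{6}.
\]
Now fix any $(|\overline{a}|, \varphi)$-Skolem function $f$ and any $b \in M^x$, and set $b' := f(\overline{a}, \overline{\varepsilon}^{(\overline{a},b)})$. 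By the Skolem property (with $b$ as witness), $b'$ satisfies $\bigwedge_{i} \varphi(x, a_i)^{\varepsilon_i^{(\overline{a},b)}}$, so $b$ and $b'$ share the same $\varphi$-type on $\overline{a}$ and $|\overline{a} \cap (\varphi(b, A) \triangle \varphi(b', A))| = 0$ as a multiset. The displayed inequality then forces $\mu(\varphi(b, A) \triangle \varphi(b', A)) \leq 1/6$, i.e., $\mu(\{a \in A : \varphi(b, a) \leftrightarrow \varphi(b', a)\}) \geq 5/6 \geq 2/3$, as required.

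The only substantive step is the VC-dimension bound for the symmetric-difference family, which is immediate from Fact \ref{fact:Every-Boolean-combination of NIP is NIP}, so the resulting $N$ depends only on $VC(\varphi)$. Everything else is a direct application of the $\epsilon$-approximation inequality, so I do not anticipate any real obstacle beyond correctly identifying the family to which the VC-theorem is applied.
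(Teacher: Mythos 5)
Your proposal is essentially the same argument as the paper's: form the symmetric-difference family of $\varphi$-definable subsets of $A$, bound its VC-dimension in terms of $VC(\varphi)$, extract an $\epsilon$-approximation $\overline{a}$ from Fact~\ref{fac:(VC-theorem)}, and observe that a Skolem reconstruction $b'=f(\overline{a},\overline{\varepsilon}^{(\overline{a},b)})$ agrees with $b$ on $\overline{a}$ (since $b$ itself witnesses consistency), so the empirical measure of the symmetric difference on $\overline{a}$ is zero and the $\mu$-measure is small. The only cosmetic difference is that you take $\epsilon=1/6$ where the paper uses $\epsilon=1/3$, which is harmless. One small imprecision worth flagging: to bound the VC-dimension of the family $\{\varphi(b,A)\triangle\varphi(b',A):b,b'\in M^x\}$, whose members are subsets of $A\subseteq M^y$ indexed by pairs in $M^{xx}$, you need Fact~\ref{fact:dual} in addition to Fact~\ref{fact:Every-Boolean-combination of NIP is NIP}: the symmetric difference $\varphi(x_1,y)\oplus\varphi(x_2,y)$ viewed with $y$ as the object variable is a Boolean combination of instances of $\varphi^{opp}$, not of $\varphi$ directly (equivalently, one can Boolean-combine $\varphi$ first and then dualize, as the paper does). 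This is routine and does not affect correctness.
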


\begin{proof}
Consider the set $\mathcal{S}=\set{\varphi(c_{1},A)\mathrela{\triangle}\varphi(c_{2},A)}{c_{1},c_{2}\in M^{x}}\subseteq\SS(A)$.

By Fact \ref{fact:Every-Boolean-combination of NIP is NIP}, the formula
$\psi(xx',y)=\varphi(x,y)\mathrela{\triangle}\varphi(x',y)$  is
NIP  and has a finite VC-dimension which is moreover bounded in terms
of $VC(\varphi)$. By Fact \ref{fact:dual} it follows that the same
is true for the family $\cal S$. Let $N$ be the number provided
by the VC-theorem for this bound and $\epsilon=\frac{1}{3}$. It then
follows that for any $\mu$ as above there is a multiset $E\subseteq A$
with $|E|\leq N$ such that $|\mu(S)-\frac{|S\cap E|}{|E|}|\leq\frac{1}{3}$
for every $S\in\mathcal{S}$. Let $\overline{a}=\sequence{a_{i}}{i<n}\in A^{\leq N}$
be a tuple listing $E$. Now, fix $b\in M^{x}$ and a $\varphi$-Skolem
function $f$ as above. Let $S_{0}=\varphi(b,A)\mathrela{\triangle}\varphi(f(\overline{a},\overline{\varepsilon}^{(\overline{a},b)}),A)$.
Then for every $i<n$:
\[
a_{i}\in\varphi(b,A)\Leftrightarrow M\vDash\varphi(b,a_{i})\Leftrightarrow
\]
\[
M\vDash\varphi(f(\overline{a},\overline{\varepsilon}^{(\overline{a},b)}),a_{i})\Leftrightarrow a_{i}\in\varphi(f(\overline{a},\overline{\varepsilon}^{(\overline{a},b)}),A),
\]
i.e., $E\cap S_{0}=\emptyset$. Therefore $|\mu(S_{0})|\leq\frac{1}{3}$
and we are done.
\end{proof}
\begin{rem}
In fact we will only use Claim \ref{claim:N} with $b=c$, i.e., considering
$\cal S'=\set{\varphi(c_{1},A)\mathrela{\triangle}\varphi(c,A)}{c_{1}\in M^{x}}$.
It is easy to see that the VC-dimension of $\cal S'$ equals the VC-dimension
of $\set{\varphi(c_{1},A)}{c_{1}\in M^{x}}$ (both shatter the same
subsets of $A$) so we do not really require Fact \ref{fact:Every-Boolean-combination of NIP is NIP}
for the rest, but the current form of the claim is a bit more uniform.
 
\end{rem}

The following claim is a translation of \cite[Claim 3.1]{MR3549531}.
It can actually be deduced from there but for the completeness of
the paper we chose to include the proof.
\begin{claim}
\label{claim:3.1}There is an integer $J=J(VC(\varphi))$ such that
for every tuple of $\varphi$-Skolem functions $\sequence{f_{n}}{n\leq N}$
where $f_{n}$ is an $(n,\varphi)$-Skolem function, there are tuples
$\overline{a_{0}},...,\overline{a_{m-1}}\in A^{\leq N}$ for $m\leq J$
such that for every $a\in A$ 
\[
\left|\set{t<m}{M\vDash(\varphi(c,a)\leftrightarrow\varphi(f_{|\overline{a_{t}}|}(\overline{a_{t}},\overline{\varepsilon}^{(\overline{a_{t}},c)}),a))}\right|>\frac{m}{2}.
\]
\end{claim}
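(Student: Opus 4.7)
The plan is to upgrade Claim \ref{claim:N} via von Neumann's minimax theorem and then extract a small deterministic witness by applying the VC theorem in the dual direction. Fix the Skolem functions $\sequence{f_n}{n \le N}$, and for each $\overline{a}\in A^{\le N}$ abbreviate $h_{\overline{a}}:=f_{|\overline{a}|}(\overline{a},\overline{\varepsilon}^{(\overline{a},c)})$ and
\[
B_{\overline{a}}:=\set{a\in A}{M\vDash\varphi(c,a)\leftrightarrow\varphi(h_{\overline{a}},a)}.
\]
In this notation, Claim \ref{claim:N} says that for every probability measure $\mu$ on $A$ some $\overline{a}$ satisfies $\mu(B_{\overline{a}})\ge 2/3$, while our goal is to find $\overline{a}_{0},\ldots,\overline{a}_{m-1}$ with $m\le J$ such that every $a\in A$ belongs to more than $m/2$ of the $B_{\overline{a}_{t}}$.

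First I would apply von Neumann's minimax theorem to the finite zero-sum game on strategy spaces $A$ (minimizer) and $A^{\le N}$ (maximizer) with payoff $1$ when $a\in B_{\overline{a}}$ and $0$ otherwise. Claim \ref{claim:N} bounds the minimax value (min over $\mu$, max over $\overline{a}$) from below by $2/3$, so by minimax the maximin value is also $\ge 2/3$, producing a probability measure $\nu$ on $A^{\le N}$ such that $\nu(\set{\overline{a}}{a\in B_{\overline{a}}})\ge 2/3$ for every $a\in A$.

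Next I would bound, in terms of $VC(\varphi)$, the VC-dimension of the family $\cal T:=\set{T_a}{a\in A}$ of subsets of $A^{\le N}$, where $T_a:=\set{\overline{a}}{a\in B_{\overline{a}}}$. Writing $\psi(x,x',y):=\varphi(x,y)\leftrightarrow\varphi(x',y)$, Fact \ref{fact:Every-Boolean-combination of NIP is NIP} bounds $VC(\psi)$ in terms of $VC(\varphi)$. Since $T_a=\set{\overline{a}\in A^{\le N}}{M\vDash\psi(c,h_{\overline{a}},a)}$ and any set shattered by $\cal T$ (which one may assume consists of tuples with distinct $h_{\overline{a}}$) corresponds to a set dually shattered by the formula $\psi(c,x',y)$ in the partition $(x';y)$, Fact \ref{fact:dual} applied to this formula bounds $VC(\cal T)$ in terms of $VC(\varphi)$.

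Finally I would apply Fact \ref{fac:(VC-theorem)} to $A^{\le N}$, the measure $\nu$, the family $\cal T$, and $\epsilon:=1/7$, obtaining $J=J(VC(\varphi))$ and a multiset $Y\subseteq A^{\le N}$ of size $m\le J$ that $(1/7)$-approximates each $T_a$. Then $|Y\cap T_a|/m\ge 2/3-1/7=11/21>1/2$ for every $a\in A$, so enumerating $Y$ with multiplicity as $\overline{a}_{0},\ldots,\overline{a}_{m-1}$ yields the desired inequality. The main obstacle is the VC-dimension bound on $\cal T$: one must check that fixing the first coordinate of $\psi$ at $c$ and then dualizing only inflates the VC-dimension by constants depending on $VC(\varphi)$, as $\log|A|$-type dependencies must be avoided throughout.
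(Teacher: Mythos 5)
Your proposal follows the same route as the paper: use Claim \ref{claim:N} to bound the minimax value of the game from below by $2/3$, apply von Neumann's minimax theorem to extract a measure $\nu$ on the Skolem-function outputs, and then apply the VC-theorem to that measure to extract a small multiset realizing a majority vote. The structure is identical; the only genuine deviation is in how you handle the fact that $T_a$ switches between positive and negative instances of $\varphi$ depending on whether $\varphi(c,a)$ holds. The paper applies the VC-theorem only to the family $S^{0}=\{\varphi(\mathcal{H},a): a\in M^y\}$, whose VC-dimension is trivially at most $VC(\varphi)$, and then uses the elementary Remark \ref{remark:X=00005Cbackslash Y} (an $\epsilon$-approximation of a set is automatically one of its complement) to cover the negative instances for free. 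You instead bound the VC-dimension of the mixed family $\mathcal{T}$ directly. Your bound is correct, but your justification is slightly off: a set shattered by $\mathcal{T}=\{T_a\}$ lives in $H$-space with parameters ranging over $A\subseteq M^y$, which is the \emph{primal} direction for $\psi(c,x',y)$ in the partition $(x';y)$, not the dual one; Fact \ref{fact:dual} is not what is doing the work here. What you actually want is either (a) the observation that $\mathcal{T}$ is contained in $\{\varphi(\mathcal{H},a):a\}\cup\{\neg\varphi(\mathcal{H},a):a\}$, and a union of two VC classes is a VC class with dimension bounded in terms of the two, or (b) Fact \ref{fact:Every-Boolean-combination of NIP is NIP} applied to $\psi(x',xy)=\varphi(x,y)\leftrightarrow\varphi(x',y)$ viewed with $x'$ as the object variable and $xy$ as parameter. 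You correctly flag this VC-dimension step as the crux; with the fix above the argument goes through, and the choice $\epsilon=1/7$ versus the paper's $1/8$ is immaterial since any $\epsilon<1/6$ works.
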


\begin{proof}
Let $J=J(\varphi)$ be the number we get by applying the VC-theorem
on $VC(\varphi)$ and $\epsilon=\frac{1}{8}$.

Let $\sequence{f_{n}}{n\leq N}$ be a sequence of $\varphi$-Skolem
functions as in the claim.

By choice of $N$ from Claim \ref{claim:N}, for every probability
measure $\mu$ on $A$ there is a tuple $\overline{a}\in A^{\leq N}$
such that $\mu(\set{a\in A}{\varphi(c,a)\leftrightarrow\varphi(f_{|\overline{a}|}(\overline{a},\overline{\varepsilon}^{(\overline{a},c)}),a)})\geq\frac{2}{3}$.

For $\overline{a}\in A^{\leq N}$, let $h_{\overline{a}}=f_{|\overline{a}|}(\overline{a},\overline{\varepsilon}^{(\overline{a},c)})$.
Let $\mathcal{H}=\set{h_{\overline{a}}}{\overline{a}\in A^{\leq N}}$.
Enumerate $A=\sequence{a_{i}}{i<|A|}$ and $\cal H=\sequence{h_{j}}{j<|\cal H|}$,
and define an $|A|\times|\mathcal{H}|$-matrix $B$ by setting
\[
B_{i,j}=\begin{cases}
1 & M\vDash(\varphi(c,a_{i})\leftrightarrow\varphi(h_{j},a_{i}))\\
0 & \text{else}
\end{cases}.
\]
For any probability measure $\mu$ on $A$, treat $\mu$ as a distribution
vector of length $|A|$. For $j<|\cal H|$, let $v^{j}\in2^{|\cal H|}$
be such that $v_{j'}^{j}=1$ iff $j=j'$ and let $\widetilde{A}_{j}=\set{a_{i}\in A}{B_{i,j}=1}$.
We have that (when both $v^{j}$ and $\mu$ are treated as column
vectors):

\[
\mu^{t}Bv^{j}=\mu^{t}\overset{\text{the }j\text{'th- column}}{\begin{pmatrix}|\\
B_{j}\\
|
\end{pmatrix}}=\sum_{i<|A|,B_{i,j}=1}\mu(a_{i})=\sum_{a\in\widetilde{A}_{j}}\mu(a)=
\]
\[
\mu(\set{a\in A}{M\vDash(\varphi(c,a)\leftrightarrow\varphi(h_{j},a))})\geq\frac{2}{3}.
\]

In particular we have $\underset{\mu\in\triangle^{|A|}}{\min}\underset{\nu\in\triangle^{|\mathcal{H}|}}{\max}\mu^{t}B\nu\geq\frac{2}{3}$
where (in general) $\triangle^{n}$ is the set of all probability
measures on the set $n=\{0,\ldots,n-1\}$.
\begin{fact}
\label{fact:(Von-Neumann's-minimax-theorem):} \cite{MR1512486} (von
Neumann's minimax theorem) for every matrix $M\in\mathbb{R}^{n\times m}$
\[
\underset{q\in\triangle^{n}}{\min}\underset{p\in\triangle^{m}}{\max}q^{t}Mp=\underset{p\in\triangle^{m}}{\max}\underset{q\in\triangle^{n}}{\min}q^{t}Mp.
\]
\end{fact}

By von Neumann's minimax theorem it follows that $\underset{\nu\in\triangle^{|\mathcal{H}|}}{\max}\underset{\mu\in\triangle^{|A|}}{\min}\mu^{t}B\nu\geq\frac{2}{3}$.
Therefore, there is some $\nu\in\triangle^{|\mathcal{H}|}$ such that
for every $\mu\in\triangle^{|A|}$, $\mu^{t}B\nu\geq\frac{2}{3}$.
For $i<|A|$, let $u^{i}\in2^{|A|}$ be such that $u_{i'}^{i}=1$
iff $i=i'$.  In particular, we have that
\[
\left(u^{i}\right)^{t}B\nu\geq\frac{2}{3}\Rightarrow\underset{\text{the \ensuremath{i}'th row}}{\begin{pmatrix}- & B_{i} & -\end{pmatrix}}\nu\geq\frac{2}{3}\Rightarrow\sum_{h_{\overline{a}}\in\widetilde{\mathcal{H}_{i}}}\nu(h_{\overline{a}})\geq\frac{2}{3}
\]
where $\widetilde{\mathcal{H}_{i}}=\set{h_{j}\in\mathcal{H}}{B_{i,j}=1}$.
Thus, for every $a\in A$ we have 
\[
\nu(\set{h_{\overline{a}}\in\mathcal{H}}{M\vDash(\varphi(c,a)\leftrightarrow\varphi(h_{\overline{a}},a))})\geq\frac{2}{3}.
\]
Consider the sets $S^{\varepsilon}=\set{\varphi(\mathcal{H},a)^{\varepsilon}}{a\in M^{y}}$
for $\varepsilon<2$. Recall that the VC-dimension of $S^{0}$ is
bounded by $VC(\varphi)$. By the choice of $J$ we can apply the
VC-theorem on $S^{0},\nu$ and $\epsilon=\frac{1}{8}$ and get that
there is a multiset $F=\set{h_{\overline{a_{i}}}}{i<m}\subseteq\mathcal{H}$
such that $m\leq J$ and $\left|\nu(\varphi(\mathcal{H},a))-\frac{|F\cap\varphi(\mathcal{H},a)|}{|F|}\right|\leq\frac{1}{8}$
for every $a\in M^{y}$.
\begin{rem}
\label{remark:X=00005Cbackslash Y}For every probability measure $p$
on a finite set $X$ and every finite multiset $F\subseteq X$ if
$\left|p(Y)-\frac{|F\cap Y|}{|F|}\right|\leq\epsilon$ for some $Y\subseteq X$
and $\epsilon>0$ then 
\[
\left|p(X\backslash Y)-\frac{|F\cap(X\backslash Y)|}{|F|}\right|\leq\epsilon.
\]
\end{rem}

By Remark \ref{remark:X=00005Cbackslash Y} it follows that for every
$a\in M$ 
\[
\left|\nu(\neg\varphi(\mathcal{H},a))-\frac{|F\cap\neg\varphi(\mathcal{H},a)|}{|F|}\right|\leq\frac{1}{8}.
\]
Now note that for every $a\in M^{y},b\in M^{x}$ we have that $D_{a,b}\in S^{0}$
or $D_{a,b}\in S^{1}$, where $D_{a,b}=\set{h_{\overline{a}}\in\mathcal{H}}{M\vDash(\varphi(b,a)\leftrightarrow\varphi(h_{\overline{a}},a))}$
(if $M\vDash\varphi(b,a)$ then $D_{a,b}=\varphi(\mathcal{H},a)$
and $D_{a,b}=\neg\varphi(\mathcal{H},a)$ otherwise). So for every
$D_{a,b}$ we have $\left|\nu(D_{a,b})-\frac{|F\cap D_{a,b}|}{|F|}\right|\leq\frac{1}{8}$
which implies that $\frac{|F\cap D_{a,b}|}{m}\geq\nu(D_{a,b})-\frac{1}{8}$.
As $\nu(D_{a,c})\geq\frac{2}{3}$ for all $a\in A$ (by the choice
of $\nu$), it then follows that for every $a\in A$ we have $\frac{|F\cap D_{a,c}|}{m}\geq\frac{2}{3}-\frac{1}{8}>\frac{1}{2}$.
Thus, for all $a\in A$, $|F\cap D_{a,c}|>\frac{m}{2}$ i.e., $|\set{t<m}{M\vDash(\varphi(c,a)\leftrightarrow\varphi(h_{\overline{a_{t}}},a))}|>\frac{m}{2}.$
\end{proof}
Now, let us finish the proof of (1) implies (2), (3) from Theorem
\ref{thm:Main}.
\begin{proof}[Proof of (1) implies (2),(3)]
By Corollary \ref{cor:what we use from k-isolation}, there is an
integer $k=k(VC(\varphi),N)$ such that for all $n\leq N$ and $(\overline{a},\overline{\varepsilon})\in A^{n}\times2^{n}$
if $\chi_{(\overline{a},\overline{\varepsilon})}(x)=\bigwedge_{i<|\overline{a}|}\varphi(x,a_{i})^{\varepsilon_{i}}$
is consistent then there are $p_{0}^{(\overline{a},\overline{\varepsilon})}\in S_{\varphi}(A)$
and $a_{0}^{(\overline{a},\overline{\varepsilon})},...,a_{k-1}^{(\overline{a},\overline{\varepsilon})}\in A$
such that, letting $A_{0}^{(\overline{a},\overline{\varepsilon})}=\set{a_{i}^{(\overline{a},\overline{\varepsilon})}}{i<k}$,
we have that $p_{0}^{(\overline{a},\overline{\varepsilon})}\vdash\chi_{(\overline{a},\overline{\varepsilon})}(x)$
and $p_{0}^{(\overline{a},\overline{\varepsilon})}|_{A_{0}}\vdash p_{0}^{(\overline{a},\overline{\varepsilon})}$.
Now, for every such $\overline{a},\overline{\varepsilon}$, define
$f_{n}^{*}(\overline{a},\overline{\varepsilon})=h$ for some $h\vDash p_{0}^{(\overline{a},\overline{\varepsilon})}.$
It follows that for all $n\leq N$, $f_{n}^{*}$ is an $(n,\varphi)$-Skolem
function.

By Claim \ref{claim:3.1}, for every sequence of $\varphi$-Skolem
functions $\sequence{f_{n}}{n\leq N}$ (such that for all $n\leq N$,
$f_{n}$ is an $(n,\varphi)$-Skolem function), there is some $m\leq J$
and tuples $\overline{a_{0}},...,\overline{a_{m-1}}\in A^{\leq N}$
such that for every $a\in A$ 
\[
\left|\set{t<m}{\varphi(x,a)\in p\Leftrightarrow M\vDash\varphi(f_{|\overline{a_{t}}|}(\overline{a_{t}},\overline{\varepsilon}^{(\overline{a_{t}},c)}),a)}\right|>\frac{m}{2}.
\]
In particular this true for $\sequence{f_{n}^{*}}{n\leq N}$, hence
we get that there are $h_{0},...,h_{m-1}\in M^{x}$ (namely $h_{t}=f_{|\overline{a_{t}}|}^{*}(\overline{a_{t}},\overline{\varepsilon}^{(\overline{a_{t}},c)})$)
such that:
\begin{itemize}
\item For every $t<m$, $\tp_{\varphi}(h_{t}/A)$ is $k$-isolated, i.e.,
there are $a_{0}^{t},...,a_{k-1}^{t}\in A$ such that for every $h'\in M^{x}$
if for every $i<k$, $M\vDash\varphi(h_{t},a_{i}^{t})\leftrightarrow\varphi(h',a_{i}^{t})$
then $\tp_{\varphi}(h_{t}/A)=\tp_{\varphi}(h'/A)$, and
\item We have that for every $a\in A$, 
\[
\left|\set{t<m}{\varphi(x,a)\in p\Leftrightarrow M\vDash\varphi(h_{t},a)}\right|>\frac{m}{2}.
\]
\end{itemize}
We claim that the following formula (which is over $A$)
\[
\psi(y)=\exists d_{0},...,d_{m-1}\bigwedge_{t<m}\bigwedge_{i<k}\varphi(d_{t},a_{i}^{t})^{\varepsilon^{(a_{i}^{t},h_{t})}}\wedge\left(\left|\set{t<m}{\varphi(d_{t},y)}\right|>\frac{m}{2}\right)
\]
defines the type $p$.

Indeed: first assume that $\varphi(x,a)\in p$. Then by the second
bullet we have that 
\[
M\vDash\bigwedge_{t<m}\bigwedge_{i<k}\varphi(h_{t},a_{i}^{t})^{\varepsilon^{(a_{i}^{t},h_{t})}}\wedge\left(\left|\set{t<m}{\varphi(h_{t},a)}\right|>\frac{m}{2}\right).
\]

And hence $\psi\left(a\right)$ holds.

Now suppose $M\vDash\psi(a)$ for $a\in A$. Then there are $h'_{0},...,h'_{m-1}$
witnessing that $\psi$ holds. But since $h'_{t}$ agrees with $h_{t}$
on $a_{0}^{t},...,a_{k}^{t}$ for every $t<m$, it follows by the
first bullet that $\tp_{\varphi}(h_{t}/A)=\tp_{\varphi}(h_{t}'/A)$
and in particular $\tp_{\varphi}(h_{t}/a)=\tp_{\varphi}(h_{t}'/a)$
which implies that $|\set{t<m}{\varphi(h_{t},a)}|>\frac{m}{2}$. Towards
a contradiction, if $\varphi(x,a)\notin p$ then by the second bullet
it follows that $M\vDash\neg\varphi(h_{t},a)$ for more than half
the $t$'s but there are more than half the $t$'s for which $M\vDash\varphi(h_{t},a)$
which is impossible.

To see that (2)  holds just note that 
\[
\psi(y)=\exists d_{0},...,d_{m-1}\bigwedge_{t<m}\bigwedge_{i<k}\varphi(d_{t},a_{i}^{t})^{\varepsilon^{(a_{i}^{t},h_{t})}}\wedge\left(\left|\set{t<m}{\varphi(d_{t},y)}\right|>\frac{m}{2}\right)
\]
is equivalent to

\[
\bigvee_{s\subseteq m,\left|s\right|>m/2}\exists d_{0},...,d_{m-1}\bigwedge_{t<m}\bigwedge_{i<k}\varphi(d_{t},a_{i}^{t})^{\varepsilon^{(a_{i}^{t},h_{t})}}\wedge\bigwedge_{t\in s}\varphi(d_{t},y).
\]
This proves (2), where $K(\varphi)$ can be easily recovered from
the proof using $N(VC(\varphi))$, $J(VC(\varphi))$ and $k(VC(\varphi),N(VC(\varphi))$
--- a rough estimate is $K(\varphi)\leq J(VC(\varphi))\cdot k(VC(\varphi),N(VC(\varphi))$
(assuming both are positive which we may).

In order to show that (3) holds note that for every $a\in A$ and
every $s\subseteq m$ we have:

\[
M\vDash\exists d_{0},...,d_{m-1}\bigwedge_{t<m}\bigwedge_{i<k}\varphi(d_{t},a_{i}^{t})^{\varepsilon^{(a_{i}^{t},h_{t})}}\wedge\bigwedge_{t\in s}\varphi(d_{t},a)
\]
 if and only if
\[
M\vDash\forall d_{0},...,d_{m-1}\bigwedge_{t<m}\bigwedge_{i<k}\varphi(d_{t},a_{i}^{t})^{\varepsilon^{(a_{i}^{t},h_{t})}}\to\bigwedge_{t\in s}\varphi(d_{t},a).
\]

Indeed, this follows easily from the first bullet as above (and the
fact that for all $t<m$ the formula $\bigwedge_{i<k}\varphi(x,a_{i}^{t})^{\varepsilon^{(a_{i}^{t},h_{t})}}$
is consistent).

From the above we get that for every $a\in A$ we have that $M\vDash\psi(a)\Leftrightarrow\psi'(a)$
where 
\[
\psi'(y)=\bigvee_{s\subseteq m,\left|s\right|>m/2}\forall d_{0},...,d_{m-1}\bigwedge_{t<m}\bigwedge_{i<k}\varphi(d_{t},a_{i}^{t})^{\varepsilon^{(a_{i}^{t},h_{t})}}\to\bigwedge_{t\in s}\varphi(d_{t},y)
\]

which gives (3).
\end{proof}
Finally, let us finish the proof.
\begin{proof}[Proof of Theorem \ref{thm:Main}]
 Assuming either (2) or (3), it is clear that there are at most finitely
many formulas of the forms described there (note that the number of
disjuncts in the formula is bounded). Thus by coding finitely many
formulas into one as in \cite[Lemma 2.5]{MR2963018} we get (4).

(4) implies (1) easily follows from type-counting argument as mentioned
in the introduction. From (4) it follows that for any finite set $A\subseteq M^{y}$
for any $M\vDash T$, $|S_{\varphi}(A)|\leq|A|^{|z|}$ where $\psi(y,z)$
uniformly defines $\varphi$-types. On the other hand, if the VC-dimension
of $\varphi$ is not bounded in $T$, then we can find finite sets
$A$ as above such that $|S_{\varphi}(A)|$ is exponential in $|A|$,
contradiction.
\end{proof}
\begin{rem}
Note that from the proof of Theorem \ref{thm:Main} one can extract
an explicit bound for the length of the variable $z$ in the defining
formula $\psi(y,z)$ that suits $\varphi(x,y)$ (as in Definition
\ref{def:(UDTFS)}) which depends only on $VC(\varphi)$. An inspection
of the proof can give a better description of the formula given in
the formulation of Theorem \ref{thm:Main}. Any improvement on the
bounds we used will automatically induce a shorter formula.
\end{rem}

\section{\label{sec:Open-questions}Open questions}

\subsection{NIP defining formula}
\begin{question}
Can we improve Theorem \ref{thm:Main} as to ensure that the defining
formula $\psi(y,z)$ is itself NIP?
\end{question}

Let us justify this question in two instances, one is when $\varphi$
is stable in $T$, and the other is when $T$ has definable Skolem
functions.

\subsubsection{Stable formulas}

Suppose that $T$ is a theory. As in Definition \ref{def:NIP in T},
a formula $\varphi(x,y)$ is\emph{ stable in $T$} if it is stable
in any completion of $T$ (see e.g., \cite[Theorem 8.2.3]{TentZiegler};
we omit ``in $T$'' for the rest of the discussion). If $\varphi(x,y)$
is stable  then a much stronger result than UDTFS holds: there is
some formula $\psi(y,z)$ such that for any $M\vDash T$ and any $A\subseteq M^{y}$,
every $\varphi$-type $p\in S_{\varphi}(A)$ is definable by a formula
of the form $\psi(y,a)$ for some $a\in A^{z}$. This can be deduced
using the 2-rank as in \cite[Chapter II, Theorem 2.12 (3)]{Sh:c}.
However, the formula that this proof gives involves quantifiers so
it is not obviously stable. Using the apparatus of non-forking extensions
one can overcome this as we now explain (this is probably well-known,
but it is not stated explicitly like this as far as we know).

First,  for every $M\vDash T$, any $\varphi$-type over $M$ is
definable over $M$ by some formula $\psi(y,m)$ where $\psi(y,z)$
is a Boolean combination of (positive) instances of $\varphi^{opp}$,
see e.g., \cite[Lemma 2.2(i)]{PillayGeometric}. In particular, $\psi(y,z)$
is itself stable  (see e.g., \cite[Lemma 2.1]{PillayGeometric}).

If $A\subseteq M\vDash T$ is any set and $p\in S_{\varphi}(A)$ (in
the notation of the previous section we should have written $p\in S_{\varphi}(A^{y})$,
but we ignore this for now), then $p$ has a non-forking extension
$q\in S_{\varphi}(M)$ which is definable over $\acl^{\eq}(A)$ via
some formula $\theta(y,a)$ (see e.g., \cite[Lemma 2.7]{PillayGeometric}).
By the first paragraph $q$ is also definable over $M$ via a Boolean
combination of instances of $\varphi^{opp}$, so that $\theta(y,a)$
is itself equivalent in $M^{\eq}$ to a Boolean combination of instances
of $\varphi^{opp}$.
\begin{prop}
\label{prop:equivalent to stable then stable}Suppose that $M\vDash T$,
$\psi(x,y)$ and $\theta(x,z)$ are formulas and $a\in M^{y},a'\in M^{z}$.
Suppose that $\psi(x,a)$ is equivalent to $\theta\left(x,a'\right)$
in $M$ and that $\theta(x,z)$ is stable. Then there is a formula
$\psi'(x,y)$ such that $\psi'(x,a)$ is equivalent to $\psi(x,a)$
in $M$ and $\psi'(x,y)$ is stable.
\end{prop}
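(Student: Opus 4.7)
My plan is to construct $\psi'$ explicitly by setting
\[
\psi'(x,y) \;:=\; \exists z\,\bigl(\beta(y,z) \wedge \theta(x,z)\bigr),
\]
where $\beta(y,z)$ is the formula $\forall x'\,\bigl(\psi(x',y) \leftrightarrow \theta(x',z)\bigr)$. Thus $\beta(y,z)$ asserts that the $y$-section of $\psi$ and the $z$-section of $\theta$ cut out the same set, and the hypothesis $M \vDash \psi(x,a) \leftrightarrow \theta(x,a')$ translates to $M \vDash \beta(a,a')$. In particular, $a'$ is always available as a witness for the existential quantifier in $\psi'(x,a)$.

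The first (easy) step I would carry out is to verify that $\psi'(x,a)$ is equivalent to $\psi(x,a)$ in $M$. For the forward direction I would plug in $z = a'$; for the converse I would use that any witness $z$ satisfies $\beta(a,z)$, which by definition forces $\psi(x,a) \leftrightarrow \theta(x,z)$.

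The heart of the argument is to show that $\psi'(x,y)$ is stable in $T$, for which I would argue by contradiction. Suppose that $(a_i)_{i<\omega}$ and $(b_j)_{j<\omega}$ lie in some $N \vDash T$ with $N \vDash \psi'(a_i,b_j) \Leftrightarrow i \leq j$. For each $j$, since $\psi'(a_0,b_j)$ holds, pick $z_j \in N^z$ with $N \vDash \beta(b_j,z_j) \wedge \theta(a_0,z_j)$. I then claim that $N \vDash \theta(a_i,z_j) \Leftrightarrow \psi'(a_i,b_j)$. The forward direction is immediate, as $\theta(a_i,z_j) \wedge \beta(b_j,z_j)$ witnesses $\psi'(a_i,b_j)$. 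For the converse, if $\psi'(a_i,b_j)$ holds with witness $z$, then $\beta(b_j,z)$ together with $\beta(b_j,z_j)$ force $\theta(\cdot,z)$ and $\theta(\cdot,z_j)$ to define the same subset of $N$, whence $\theta(a_i,z) \Leftrightarrow \theta(a_i,z_j)$. This identification shows $N \vDash \theta(a_i,z_j) \Leftrightarrow i \leq j$, contradicting the stability of $\theta(x,z)$.

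The point that needs care is that $\psi'$ contains an $\exists z$ wrapped around a $\forall x'$, and in general such quantifier layers can destroy stability. What rescues the argument is that $\beta$ makes any witness $z$ for $\psi'(\cdot,b_j)$ canonical up to extensional equality of the $z$-section of $\theta$; this uniformity is precisely what allows one to choose the $z_j$ coherently and reduce instability of $\psi'$ to instability of $\theta$.
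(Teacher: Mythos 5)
Your proof is correct and follows essentially the same approach as the paper. Your $\psi'(x,y)=\exists z(\beta(y,z)\wedge\theta(x,z))$ is logically equivalent to the paper's $\psi'(x,y)=\psi(x,y)\wedge\exists z\,\tau(yz)$ (with $\tau=\beta$), and in both arguments instability of $\psi'$ is reduced to instability of $\theta$ by choosing, for each $j$, a witness $z_j$ of the existential quantifier at $b_j$ and observing that the matching formula $\beta$ forces $\theta(\cdot,z_j)$ to agree with any other witness. Your version is slightly cleaner in that it works directly with an order-property configuration and does not invoke indiscernibility (which the paper's proof does, unnecessarily, to produce the $z_j$'s).
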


\begin{proof}
Let  $\tau(yz)=\forall x(\psi(x,y)\leftrightarrow\theta(x,z))$ so
that $M\vDash\tau(aa')$. Let $\psi'(x,y)=\psi(x,y)\land\exists z\tau(yz)$.
Then $\psi'(x,a)$ is equivalent to $\psi(x,a)$ in $M$ and $\psi'$
is stable: suppose towards a contradiction that $\sequence{b_{i}a_{i}}{i<\omega}$
is an indiscernible sequence which witnesses that $\psi'$ has the
order property ($\psi'(b_{i},a_{j})$ iff $i\leq j$) in some model
$N\vDash T$. Then, since e.g., $\psi'(b_{0},a_{0})$ holds, by indiscernibility
it follows that for every $j<\omega$, there exists some $a_{j}'$
such that $\tau(a_{j}a_{j}')$ holds. Hence we get that $i\leq j$
iff $\theta(b_{i},a_{j}')$, contradicting the stability of $\theta$.
\end{proof}
Continuing our discussion from above, Proposition \ref{prop:equivalent to stable then stable}
implies that $q$ is definable over $\acl^{\eq}(A)$ via some formula
$\theta'(y,a)$ such that $\theta'(y,z)$ is stable.

 Let $\chi(z,c)\in\tp(a/A)$ be an algebraic formula of minimal (finite)
size so that if $a'\vDash\chi$ then $a'\equiv_{A}a$. Let $\beta(y,c)=\exists z\chi(z,c)\land\theta'(y,z)$.
Then $\beta(y,c)$ is equivalent (in $M$) to $\bigvee_{a'\vDash\chi(z,c)}\theta'(y,a')$
and $\bigvee_{i<m}\theta'(y,z_{i})$ is stable as a Boolean combination
of stable  formulas (where $m$ is the size of $\chi(M,c)$). Hence
 by Proposition \ref{prop:equivalent to stable then stable} there
is some formula $\beta'(y,w)$ such that $\beta'(y,c)$ defines $p$
and $\beta'(y,w)$ is stable.

Now, if we had started with $A\subseteq M^{y}$, then we could have
let $A'$ be the set of all elements appearing in some tuple from
$A$, and do the same process. 

From all this we got that:
\begin{cor}
\label{cor:stable formula non-uniform}If $\varphi(x,y)$ stable,
$M\vDash T$, $A\subseteq M^{y}$ and $p\in S_{\varphi}(A)$ then
$p$ is $A$-definable via a stable formula $\psi(y,z)$.
\end{cor}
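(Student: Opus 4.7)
The plan is to guard $\psi$ by the condition that it picks out the same set as some instance of $\theta$. Concretely, I would set
\[
\psi'(x,y) = \psi(x,y) \wedge \exists z\, \forall x'\bigl(\psi(x',y) \leftrightarrow \theta(x',z)\bigr).
\]
Since $\psi(x,a)$ and $\theta(x,a')$ are equivalent in $M$, the tuple $a'$ witnesses the existential at $y = a$, so $\psi'(x,a)$ is equivalent to $\psi(x,a)$ in $M$, giving half of the conclusion for free.

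For stability of $\psi'(x,y)$, I would argue by contradiction. Suppose $\psi'$ has the order property, witnessed in some $N \models \Th(M)$ by an indiscernible sequence $\langle b_i a_i \,|\, i<\omega\rangle$ with $\psi'(b_i,a_j) \Leftrightarrow i\leq j$. Since $\psi'(b_0,a_0)$ holds, the existential guard is satisfied at $a_0$, so there is some $c_0$ with $\forall x'(\psi(x',a_0)\leftrightarrow \theta(x',c_0))$. By indiscernibility (after extending each $a_i$ to include such a witness $c_i$ and re-extracting an indiscernible from the enlarged tuples via the standard Ramsey/compactness argument), we may assume $\langle b_i a_i c_i \,|\, i<\omega\rangle$ is indiscernible and that each $c_i$ witnesses the guard for $a_i$.

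Now translate the order: if $i \leq j$, then $\psi'(b_i,a_j)$ holds, hence $\psi(b_i,a_j)$ holds, hence (using $c_j$) $\theta(b_i,c_j)$ holds. If $i > j$, then $\psi'(b_i,a_j)$ fails, but the guard at $a_j$ is satisfied by $c_j$, so the failure must come from $\psi(b_i,a_j)$ failing, whence $\theta(b_i,c_j)$ fails. Thus $\theta(b_i,c_j) \Leftrightarrow i \leq j$, contradicting stability of $\theta(x,z)$.

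The main potential obstacle is the witness-extraction step: one must ensure that the $c_i$'s can be chosen uniformly along an indiscernible sequence, not just existentially for each individual $a_i$. The standard fix is to add the witnesses to the tuples and re-extract an indiscernible sequence (by Ramsey plus compactness) with the same EM-type over the original parameters, which is routine. Once that is done the order-property contradiction is immediate, and the construction of $\psi'$ itself is essentially forced by the shape of the hypothesis.
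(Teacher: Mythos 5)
Your proof correctly establishes what the paper isolates as Proposition \ref{prop:equivalent to stable then stable}: if $\psi(x,a)$ is equivalent in $M$ to $\theta(x,a')$ for some stable $\theta(x,z)$, then there is a stable $\psi'(x,y)$ with $\psi'(x,a)$ equivalent to $\psi(x,a)$ in $M$. Your guard-and-witness argument is essentially the paper's proof of that Proposition, and the witness-extraction concern you flag at the end is handled even more cheaply than you suggest: once $\psi'(b_0,a_0)$ holds, indiscernibility gives $\exists z\,\forall x'(\psi(x',a_j)\leftrightarrow\theta(x',z))$ for every $j$, and you may pick any $a_j'$ realizing it --- there is no need for the $a_j'$'s to lie along an indiscernible sequence, since $\theta(b_i,a_j')\leftrightarrow i\leq j$ already exhibits the order property for $\theta$, which is all that is needed.

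However, the Proposition is only one ingredient, and the bulk of the Corollary is missing from your argument. You never say where $\psi$ and $\theta$ come from: in particular you do not establish that a $\varphi$-type $p$ over an \emph{arbitrary} subset $A\subseteq M^{y}$ is $A$-definable at all, nor that its definition is equivalent in $M$ to an instance of some stable formula so that the Proposition can be applied. The paper's route is: pass to a non-forking extension $q\in S_{\varphi}(M)$ of $p$; use that $q$ is definable over $\acl^{\eq}(A)$ by some $\theta(y,a)$ and is simultaneously definable over $M$ by a Boolean combination of (positive) instances of $\varphi^{opp}$, which is itself stable, so that $\theta(y,a)$ is equivalent in $M^{\eq}$ to an instance of a stable formula; apply the Proposition to replace $\theta$ by a stable formula $\theta'$; descend from $\acl^{\eq}(A)$ to $A$ by bundling the finitely many $A$-conjugates of $a$ behind an algebraic formula $\chi(z,c)$ and forming $\beta(y,c)=\exists z\,(\chi(z,c)\land\theta'(y,z))$, which is equivalent in $M$ to a finite disjunction of instances of $\theta'$ and hence to an instance of a stable formula; and apply the Proposition once more to replace $\beta$ by a stable defining formula. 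Without this scaffolding your argument has no hypotheses to run on, so as written it proves the auxiliary Proposition but not the Corollary.
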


And a uniform version:
\begin{cor}
Suppose that $T$ is any $L$-theory and that $\varphi(x,y)$ is stable.
Then there is a stable formula $\psi(y,z)$ such that for all $M\vDash T$,
all $A\subseteq M^{y}$ with $|A|\geq2$ and all $p\in S_{\varphi}(A)$
there is some $a\in A^{z}$ such that $\psi(y,a)$ defines $p$.
\end{cor}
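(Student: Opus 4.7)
The plan is to combine Theorem \ref{thm:Main} with Proposition \ref{prop:equivalent to stable then stable}, using stability theory to manufacture a uniform stable ``template'' defining formula. First I would apply the Main Theorem to $\varphi$ (which is NIP since stable) to obtain a formula $\psi_{0}(y,z)$ that uniformly defines $\varphi$-types over finite sets of size $\geq 2$ in $T$. The task then reduces to massaging $\psi_{0}$ into a stable formula while preserving its defining property.

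The key intermediate step is to produce a single stable formula $\theta(y,w)$ in $L$ such that for every $M \vDash T$, every finite $A \subseteq M^{y}$ with $|A| \geq 2$, and every $p \in S_{\varphi}(A)$, some instance $\theta(y,b)$ with $b \in M^{w}$ defines $p$ (the parameters $b$ may live anywhere in $M$, not only in $A$). Given such a $\theta$, I would set
\[
\psi(y,z) := \psi_{0}(y,z) \wedge \exists w\,\forall y'\bigl(\psi_{0}(y',z) \leftrightarrow \theta(y',w)\bigr).
\]
Then $\psi(y,a)$ is equivalent to $\psi_{0}(y,a)$ whenever $\psi_{0}(y,a)$ defines some $p \in S_{\varphi}(A)$ (the $\exists w$ witness is provided by the corresponding $\theta$-instance), so $\psi$ uniformly defines $\varphi$-types with parameters in $A$. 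Stability of $\psi$ would follow from the argument in Proposition \ref{prop:equivalent to stable then stable}: any order sequence $\sequence{b_{k},a_{l}}{k,l<\omega}$ witnessing instability of $\psi$ forces $\exists w$-witnesses $a_{l}'$ along the whole sequence (from one initial witness together with indiscernibility), producing an order sequence $\sequence{b_{k},a_{l}'}{k,l<\omega}$ for $\theta$ and contradicting stability of $\theta$.

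The construction of $\theta$ is the real content of the proof and also where the main obstacle lies. The plan is to use that the Shelah $2$-rank of the stable formula $\varphi$ in $T$ is a finite number $r = r(\varphi)$, so every $\varphi$-type over any set is definable by a Boolean combination of at most $N = N(r)$ instances of $\varphi^{opp}$. Taking $\theta(y,w_{1},\ldots,w_{N})$ to be a generic DNF of this bounded shape yields a formula which is stable (as a Boolean combination of instances of the stable formula $\varphi^{opp}$) and which covers every $\varphi$-type via non-forking extension to an ambient model. The main difficulty is justifying the bounded-complexity claim above, which rests on standard but nontrivial stability theory, together with the technicality that non-forking extensions of $\varphi$-types over finite sets are naturally definable over $\acl^{\eq}(A)$ rather than $M$; the latter can be handled by the algebraic-formula trick from the proof of Corollary \ref{cor:stable formula non-uniform}, at the cost of a modest further increase in the complexity of $\theta$.
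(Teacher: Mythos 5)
Your plan breaks down at the step where you claim ``the $\exists w$ witness is provided by the corresponding $\theta$-instance.'' For your formula
\[
\psi(y,z) := \psi_{0}(y,z) \wedge \exists w\,\forall y'\bigl(\psi_{0}(y',z) \leftrightarrow \theta(y',w)\bigr)
\]
to be usable, you need $\psi(y,a)$ to be equivalent to $\psi_{0}(y,a)$ whenever $\psi_{0}(y,a)$ defines $p \in S_{\varphi}(A)$. That requires a parameter $b$ with $M \vDash \forall y'\bigl(\psi_{0}(y',a) \leftrightarrow \theta(y',b)\bigr)$, i.e.\ \emph{global} equivalence of $\psi_{0}(y,a)$ and $\theta(y,b)$ as formulas in $y$. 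But your intermediate step only provides some $b$ such that $\theta(y,b)$ \emph{defines} $p$, which (by the paper's Definition 2.1) merely means that $\theta(y,b)$ and $\psi_{0}(y,a)$ agree on the finite set $A$. Two formulas defining the same $\varphi$-type over a finite set can easily disagree off $A$, so the existential has no witness in general; then $\psi(y,a)$ is identically false and does not define $p$. Note that Proposition \ref{prop:equivalent to stable then stable}, which you are mimicking, explicitly assumes the two formulas are \emph{equivalent} in $M$, not merely co-defining. There is no quick fix: restricting the inner quantifier to $A$ (or to the tuple $z$) either produces a non-$L$-formula or destroys the order-property-transfer argument, since the quantifier over all $y'$ is exactly what allowed you to pass witnesses along the indiscernible sequence.

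The paper's route avoids this issue entirely and is worth comparing. It first proves the non-uniform statement (Corollary \ref{cor:stable formula non-uniform}): for each individual $M, A, p$ there is \emph{some} stable $\psi(y,z)$ and $a \in A^{z}$ with $\psi(y,a)$ defining $p$. It then observes, via a compactness argument with a fresh predicate $P$ naming $A$, that finitely many stable formulas must already suffice for all $M, A, p$; finally it codes those finitely many stable formulas into a single stable formula by the trick of \cite[Lemma 2.5]{MR2963018}. This sidesteps any need for global equivalence between a uniform defining formula and a stable one, which is precisely the step your proposal cannot supply. Your use of bounded $2$-rank and non-forking to get a stable ``template'' is close in spirit to the paper's proof of Corollary \ref{cor:stable formula non-uniform}, but the uniformization step must go through compactness-plus-coding rather than through the $\exists w$-conditioning.
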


\begin{proof}
By compactness, as in the proof of \cite[Chapter II, Theorem 2.12(3)]{Sh:c}.

First we show that ({*}) there are finitely many stable formulas
which work for every $p\in S_{\varphi}(A)$. Indeed, assume not and
add a new predicate $P(y)$ to the language and consider the partial
type $\Gamma(x)$ in the language $L\cup\{P\}$ consisting of formulas
$\theta_{\psi}(x)$ where 
\[
\theta_{\psi}(x)=\neg\exists z\in P(\forall y\in P(\varphi(x,y)\leftrightarrow\psi(y,z)))
\]
 for every stable formula $\psi(y,z)$ (where $z$ is a tuple of
copies of $y$).

By our assumption towards a contradiction, for all stable formulas
$\psi_{0}(y,z_{0}),\ldots,\psi_{k-1}(y,z_{k-1})$ there is some $M\vDash T$,
$A\subseteq M^{y}$ and a type $p\in S_{\varphi}(A)$ which is not
definable by any of the formulas $\psi_{0},\ldots,\psi_{k-1}$. This
means that $\Gamma(x)$ is consistent. By compactness there is a model
$M\models T$, $A\subseteq M$ and $a\in M^{x}$ such that $\tp_{\varphi}(a/A)$
is not definable by any stable formula $\psi(y,z)$. But this contradicts
Corollary \ref{cor:stable formula non-uniform}. This shows ({*}).

Now using the standard coding trick as in \cite[Lemma 2.5]{MR2963018}
we can code finitely many formulas $\psi_{0},\ldots,\psi_{k-1}$ into
one formula $\psi$. We leave it to the reader to make sure that if
all the formulas $\psi_{0},\ldots,\psi_{k-1}$ are stable, then so
is $\psi$.
\end{proof}

\subsubsection{\label{subsec:Having-definable-Skolem}Having definable Skolem functions}
\begin{prop}
If $T$ is any theory with definable Skolem functions and $\varphi(x,y)$
is an NIP formula, then there is a formula $\psi(y,z)$ which uniformly
defines $\varphi$-types over finite sets and such that $\psi(y,z)$
is itself NIP.
\end{prop}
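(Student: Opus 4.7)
The plan is to take the existential defining formula from the proof of (1)$\Rightarrow$(2) in Theorem~\ref{thm:Main} and replace its existential quantifier by a definable Skolem witness supplied by the hypothesis on $T$, then verify that the resulting formula still defines the correct $\varphi$-type on $A$ (via the $k$-isolation property built into the formula) and that it is NIP (by a direct VC-dimension bound on its family of defined sets).

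In detail, the construction in the proof of Theorem~\ref{thm:Main} produces for every $M\vDash T$, finite $A\subseteq M^y$, and $p\in S_\varphi(A)$ a formula
\[
\psi(y,\bar z)\;=\;\exists d_0,\ldots,d_{m-1}\;\bigl[\alpha(\bar d,\bar z)\wedge\beta(\bar d,y)\bigr]
\]
defining $p$, where $\alpha(\bar d,\bar z)=\bigwedge_{t,i}\varphi(d_t,z_i)^{\varepsilon_{i,t}}$ depends only on $\bar d,\bar z$, and $\beta(\bar d,y)=\bigl(|\{t<m:\varphi(d_t,y)\}|>m/2\bigr)$. Since $T$ has definable Skolem functions, and $\exists\bar d\,\alpha(\bar d,\bar z)$ is an $L$-formula with free variables $\bar z$ only, one obtains an $L$-definable tuple of functions $\bar f(\bar z)=(f_0(\bar z),\ldots,f_{m-1}(\bar z))$ such that $\alpha(\bar f(\bar z),\bar z)$ holds whenever $\alpha$ is consistent over $\bar z$. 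Let $\psi'(y,\bar z)$ be the $L$-formula (written using the defining formulas of the $f_t$) expressing
\[
\alpha(\bar f(\bar z),\bar z)\wedge\beta(\bar f(\bar z),y).
\]

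For correctness on $A$: Corollary~\ref{cor:what we use from k-isolation} is used in the main theorem's proof precisely so that every $\bar d$ satisfying $\alpha(\bar d,\bar a)$ realizes the same $\varphi$-type over $A$ as the intended witness $\bar h$, i.e., $\tp_\varphi(d_t/A)=\tp_\varphi(h_t/A)$ for every $t$. Since $\bar h$ witnesses $\exists\bar d\,\alpha$, the Skolem output $\bar f(\bar a)$ is a valid such witness, and so for every $a\in A$, $\beta(\bar f(\bar a),a)\Leftrightarrow\beta(\bar h,a)\Leftrightarrow\varphi(x,a)\in p$; hence $\psi'(y,\bar a)$ defines $p$.

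For NIP of $\psi'$: for every $\bar z_0\in M^{\bar z}$, the set $\psi'(M^y,\bar z_0)$ is either empty (when $\alpha(\bar f(\bar z_0),\bar z_0)$ fails) or equal to $\{y:\beta(\bar c,y)\}$ for $\bar c=\bar f(\bar z_0)\in(M^x)^m$. Hence the family of $\psi'$-defined subsets of $M^y$ is contained in $\{\emptyset\}\cup\{\{y:\beta(\bar c,y)\}:\bar c\in(M^x)^m\}$. The formula $\beta(\bar c,y)$ is a Boolean combination of instances $\varphi(c_t,y)=\varphi^{opp}(y,c_t)$, each NIP in partition $y\mid\bar c$ by Fact~\ref{fact:dual}; so $\beta$ is NIP in this partition by Fact~\ref{fact:Every-Boolean-combination of NIP is NIP}, which bounds the VC dimension of the enclosing family uniformly. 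Therefore $\psi'$ is NIP, and coding the finitely many possible shapes of $\psi'$ (arising from bounded $m$ and sign patterns) into a single formula as in the proof of Theorem~\ref{thm:Main}(4) preserves NIP. The main obstacle is the correctness step: the substitution $\exists\bar d\leadsto\bar f(\bar z)$ is not generally a logical equivalence, and it is precisely the $k$-isolating structure of $\alpha$ that makes this substitution correct on the set $A$.
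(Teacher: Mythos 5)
Your proof is correct, but it takes a genuinely different route from the paper's. The paper's argument observes that when $T$ has definable Skolem functions one does not need Corollary~\ref{cor:what we use from k-isolation} at all: the $(n,\varphi)$-Skolem functions $f_{n}^{*}$ constructed via $k$-isolation in the proof of Theorem~\ref{thm:Main}(1)$\Rightarrow$(2) are simply replaced by the $L$-definable Skolem functions $f_{n,\bar\varepsilon}$ whose existence is hypothesized, and the majority-vote formula from Claim~\ref{claim:3.1} (with the Skolem terms plugged in) already defines $p$ and is an instance of a Boolean combination of formulas $\varphi(f(\bar z),y)$, hence NIP. You instead keep the $k$-isolation-based existential formula of the main theorem intact, and then Skolemize its leading existential quantifier; the correctness of this substitution then genuinely needs the $k$-isolating structure of $\alpha$ (as you rightly flag at the end) to ensure that the Skolem output has the same $\varphi$-type over $A$ as the witnesses $\bar h$. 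Both arguments are sound. What the paper's approach buys is a simplification: Fact~\ref{fact:k-isolation} and Corollary~\ref{cor:what we use from k-isolation} become unnecessary in the Skolemized setting, which is precisely why the remark in the introduction notes that ``the first ingredient alone'' suffices under definable Skolem functions. What your approach buys is a conceptual observation: the $k$-isolation clause $\alpha$ in the definition is exactly the device that makes a naive post-hoc Skolemization of the existential quantifier preserve the defined set, and your direct VC-dimension computation for the family $\{\emptyset\}\cup\{\{y:\beta(\bar c,y)\}:\bar c\}$ is a fine alternative to invoking closure of NIP under precomposition with definable functions.
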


\begin{proof}
By inspecting the proof of Theorem \ref{thm:Main} (1) implies (2),
one sees that in the case when $T$ has definable Skolem functions,
then we would not have to use Corollary \ref{cor:what we use from k-isolation}
at all. Instead, we could define the $\varphi$-type $p\in S_{\varphi}(A)$
by 
\[
\varphi(x,a)\in p\Leftrightarrow\left|\set{t<m}{\varphi(f_{|\overline{a_{t}}|,\overline{\varepsilon}^{(\overline{a_{t}},c)}}(\overline{a_{t}}),a)}\right|>\frac{m}{2},
\]
where $c\models p$ and the $\bar{a}_{t}$'s are provided by Claim
\ref{claim:3.1} for the definable Skolem whose existence we assume,
denoted by $f_{n,\bar{\varepsilon}}$ (i.e., $f_{n,\overline{\varepsilon}}(\overline{a})$
returns some element $b$ satisfying $\bigwedge_{i<n}\varphi(x,a_{i})^{\varepsilon_{i}}$
if such exists). In other words, the formula $\psi(y,b)$ defining
$p$ is a Boolean combination of instances of formulas of the form
$\varphi(f(b),y)$ for $b$ a tuple from $A$ and $f$ some definable
function.

In general, when $\varphi(x,y)$ is an NIP formula and $f$ is some
definable function, then $\psi(z,y)=\varphi(f(z),y)$ is also NIP
 (if $\set{\psi(M,b)}{b\in M^{y}}$ shatters some set $A\subseteq M^{z}$
then $\set{\varphi(M,b)}{b\in M^{y}}$ shatters the image of $A$
under $f$ which has the same size as $A$). This means (by Fact
\ref{fact:Every-Boolean-combination of NIP is NIP}) that $\psi(y,z)$
is NIP.

As in the stable case, coding finitely many NIP formulas into one
gives an NIP formula, so we are done.
\end{proof}

\subsection{\label{subsec:Honest-definitions}Honest definitions}
\begin{defn}
\cite[Definition 3.16 and Remark 3.14]{simon2015guide} Work in some
theory $T$. Suppose that $\varphi(x,y)$ is a formula, $A\subseteq M^{y}$
some set and $p\in S_{\varphi}(A)$. Say that a formula $\psi(y,z)$
(over $\emptyset$) is an \emph{honest definition} of $p$ if for
every finite $A_{0}\subseteq A$ there is some $b\in A^{z}$ such
that for all $a\in A$, if $\psi(a,b)$ holds then $\varphi(x,a)\in p$
and for all $a\in A_{0}$ the other direction holds: if $\varphi(x,a)\in p$
then $\psi(a,b)$ holds.
\end{defn}

It is proved in \cite[Theorem 6.16]{simon2015guide}, \cite[Theorem 11]{ArtemPierreII}
that if $T$ is NIP then for every $\varphi(x,y)$ there is a formula
$\psi(y,z)$ that serves as an honest definition for any type in $S_{\varphi}(A)$
provided that $|A|\geq2$.
\begin{question}
Is this true only assuming that $\varphi(x,y)$ is NIP?
\end{question}

Note that for the proof in \cite[Theorem 6.16]{simon2015guide}, \cite[Theorem 11]{ArtemPierreII},
only a very mild assumption of NIP is actually needed. See \cite[Remark 16]{ArtemPierreII}.

\bibliographystyle{alpha}
\bibliography{common}

\end{document}